\newtheoremstyle{mystyle}
  {}
  {}
  {}
  {}
  {\bfseries}
  {}
  { }
  {\thmname{#1}\thmnumber{ #2}\thmnote{ (#3)}}
\numberwithin{equation}{section} 
\theoremstyle{mystyle}
\newtheorem{theorem}[equation]{Theorem}
\newtheorem{fact}[equation]{Fact}
\newtheorem{definition}[equation]{Definition}
\newtheorem{lemma}[equation]{Lemma}
\newtheorem{notation}[equation]{Notation}
\let\oldproof\proof
\definecolor{my-dark-gray}{gray}{0.13}
\renewcommand{\proof}{\color{my-dark-gray}\oldproof}
\tikzset{
every picture/.style={line width=0.8pt, >=stealth,
                       baseline=-3pt,label distance=-3pt},
emptynode/.style={circle,minimum size=0pt, inner sep=0pt, outer
sep=0},
dotnode/.style={fill=black,circle,minimum size=2.5pt, inner sep=1pt, outer
sep=0},
small_dotnode/.style={fill=black,circle,minimum size=2pt, inner sep=0pt, outer
sep=0},
morphism/.style={fill=white,circle,draw,thin, inner sep=1pt, minimum size=15pt,
                 scale=0.8},
small_morphism/.style={fill=white,circle,draw,thin,inner sep=1pt,
                       minimum size=10pt, scale=0.8},
ellipse_morphism/.style args={#1}{fill=white,circle,draw,thin,inner sep=1pt,
                       minimum size=5pt, scale=0.8,
												ellipse, draw, rotate=#1},
coupon/.style={draw,thin, inner sep=1pt, minimum size=18pt,scale=0.8},
semi_morphism/.style args={#1,#2}{
                  fill=white,semicircle,draw,thin, inner sep=1pt, scale=0.8,
                  shape border rotate=#1,
                  label={#1-90:#2}},
regular/.style={densely dashed}, 
edge/.style={very thick, draw=green, text=black},
overline/.style={preaction={draw,line width=2mm,white,-}},
thin_overline/.style={preaction={draw,line width=#1 mm,white,-}},
thin_overline/.default=2,
thick_overline/.style={preaction={draw,line width=3mm,white,-}},
really_thick/.style={line width=3mm, gray},
boundary/.style={thick,  draw=blue, text=black},
ribbon/.style={line width=1.5mm, postaction={draw,line width=1mm,white}},
ribbon_u/.style args={#1,#2}{line width=#1mm, postaction={draw,line width=#2mm,white}},
cell/.style={fill=black!10},
subgraph/.style={fill=black!30},
midarrow/.style={postaction={decorate},
                 decoration={
                    markings,
                    mark=at position #1 with {\arrow{>}},
                 }},
midarrow/.default=0.5,
midarrow_rev/.style={postaction={decorate},
                 decoration={
                    markings,
                    mark=at position #1 with {\arrow{<}},
                 }},
midarrow_rev/.default=0.5,
block/.style={rectangle, rounded corners, text centered, draw=black, align=center}
}
\tikzstyle{block} = [rectangle, rounded corners, text centered, draw=black, align=center]
\title{Explicit Factorization of a Categorical Center}
\author{Jin-Cheng Guu and Ying Hong Tham}
\date{}
\begin{document}

\maketitle

\abstract{

  Given a braided fusion category $C$, it is well known that the
  natural map $C \boxtimes C^{bop} \to Z(C)$ from the square of
  $C$ to the (Drinfeld) categorical center $Z(C)$ is an
  equivalence if and only if $C$ is modular. However, it is not
  clear how to construct the inverse and the natural
  isomorphisms. In this work, we provide an explicit construction
  using insights from a specific quantum field theory, and
  explore how the equivalence fails for the degenerate cases.

}

\tableofcontents

\section{Introduction}

~\newline
\noindent \textbf{Ubiquity of Symmetries}

Symmetry has always been the central topic of pure mathematics.
The reason of its ubiquity is straightforward: Any symmetrical
object can be made much simpler by dividing out the extra
information, and any object with its original symmetry divided
out can be made more intuitive by recovering the extra
information. It is therefore useful, practically and
conceptually, to pass between both pictures.

~\newline
\noindent \textbf{Symmetries in Physics}

During the mid-$19$th century, the use of symmetries entered and
fundamentally changed theoretical physics. In particular, the
advent of Maxwell's equations and their further reduction by
$U(1)$-symmetry provided profound insights leading us to the
birth of Einstein's relativity and quantum mechanics. Later in
the $20$th century, the hidden symmetries of our universe were
further exploited by gauge theorists, who provided the celebrated
Standard Model.

~\newline
\noindent \textbf{Groups as Classical Symmetries}

Mathematically, the backbone of symmetries in gauge theory was
provided by \textit{groups} (collections of symmetries). By
understanding possible behaviors of groups, we can predict the
phenomena in an object (e.g. the space-time, a statistical model,
a molecule \cite{serre/finite-group-rep}.. etc) whose symmetries
are described by the group.

~\newline
\noindent \textbf{Higher Symmetries}

However, while successful, groups do not describe all kinds of
symmetries. The existence of the quantum groups is one of the
precursor of such insufficiency. In general, we need higher
symmetries, some of which are described by fusion categories,
which play an important role in modern mathematics, physics (both
theoretical and experimental) \cite{kong/anyon-condensation},
information theory and coding theory, quantum computing (both
theoretical and architectural).. etc.

~\newline
\noindent \textbf{Fusion Categories as Higher Symmetries}

In a nutshell, by viewing a finite group (and its group algebra)
as a $1$-categorical object, one can view a fusion category as an
analogous $2$-categorical object and a braided fusion category as
an analogous $3$-categorical object. As we climb the categorical
ladder, more information about symmetries is preserved.

~\newline
\noindent \textbf{Fusion Categories in Quantum Field Theory}

The current work is about the structure of a braided fusion
category. While we have mentioned its usefulness and its
abundance of applications, we wish to stress on a particular
application in quantum field theory and mathematics.

\noindent \textbf{}

The story started from knot theory. A knot is (an isotopy class
of) an embedding of a circle in an Euclidean $3$-space. While
both a circle and an Euclidean $3$-space are trivial, the
embeddings are notoriously hard to study. As an illustrative
example, try to tell if the following knots are isotopic to a
trivial embedding \cite{ochiai/hard-unknot}.

\noindent \textbf{}

\begin{center}
  \includegraphics[height=4cm]{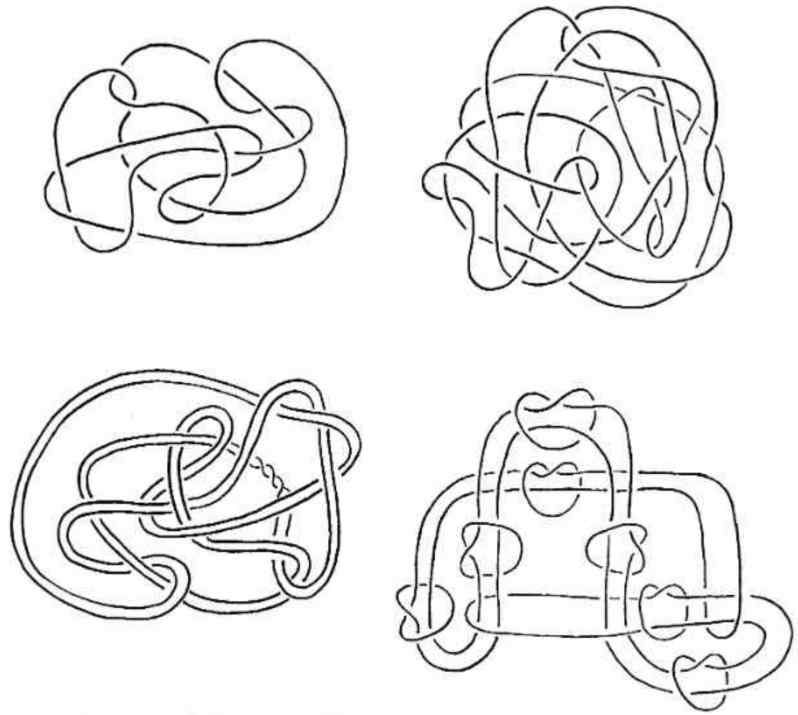}
\end{center}

\noindent \textbf{}

Despite its playfulness, it is beyond a fun brain-teaser. In
fact, knot theory relates deeply to theoretical physics
\cite{baez/knot-gravity} and modern number theory. It is
therefore of importance to develop a complete understanding
towards knot theory, which remains to be a wildly open subject
that currently keeps many mathematicians and physicists busy.

~\newline
\noindent \textbf{An Unexpected Invariant: Jones Polynomials}

One of the tremendous breakthrough was made in the mid $80$s by
the Fields medalist Vaughan Jones. His work on the Jones
polynomials unexpectedly \footnote{This amazing discovery won him
  a Fields medal in 1990.} connected knot theory and statistical
mechanics. The mysterious Jones polynomials were later explained
by Edward Witten in $1989$ as a special case of a larger
framework, the $3$-dimensional Witten-Reshetikhin-Turaev (WRT)
topological quantum field theory (TQFT), which is a quantized
Chern-Simons theory in dimension $3$.

~\newline
\noindent \textbf{Jones Polynomials as a Special Case of a $3$D TQFT}

Enter the braided fusion categories. They are the algebraic input
of the WRT TQFTs. However, we need them to be \textit{modular}
(or called \textit{non-degenerate}) in order to define the
theory. The WRT theory is itself a vast framework that still
keeps mathematicians busy, for its deep relationship with the
theory of integer primes \cite{morishita/knot-and-prime}, (mock)
modular forms \cite{cheng/q-inv-and-mock-theta}, topological
quantum computations \cite{kitaev/qc-by-anyons}..

~\newline
\noindent \textbf{A Larger Framework: Crane-Yetter $4$D TQFT}

Despite its vastness, there is a larger framework behind it, the
$4$-dimensional Crane-Yetter topological quantum field theory
(CY). Several evidences \footnote{The full result has been long
  expected but left unwritten partly because a rigorous proof
  needs to establish an equivalence between $\partial CY$ and
  $WRT$ as $2$-functors, which is quite technical.} have been
found that WRT is the boundary theory of CY whenever the
algebraic input is non-degenerate
\cite{barrett/observables-in-tv-and-cy}
\cite{fac-homo--kirillov-tham} \cite{tham/phd-thesis}, making CY
more flexible and potentially more powerful. Therefore, it is
natural to understand how much more power the degeneracy gives to
the CY model.

~\newline
\noindent \textbf{CY - WRT = ?}

With the excision formula of CY, the degeneracy leads to the
non-triviality of CY (in dimension $2$) precisely because of the
following statement.

\begin{fact} 
  Let $C$ be a braided fusion category, and let $F$ be the
  natural functor from the square of $C$ to the (Drinfeld)
  categorical center
  $$C \boxtimes C^{bop} \xrightarrow{F} Z(C)$$
  $$X \boxtimes Y \mapsto (X \otimes Y, c_{-,X} \otimes (c^{\text{-}1})_{Y,-})$$
  where $c_{-,\star}$ denotes the braiding of $C$. Then $F$ is an
  equivalence of categories if and only if $C$ is non-degenerate.
\end{fact}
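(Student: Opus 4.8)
The plan is to factor $F$ through the two standard braided embeddings and then invoke the centralizer calculus inside $Z(C)$. Write $F_+\colon C\to Z(C)$, $X\mapsto (X,c_{-,X})$, and $F_-\colon C^{bop}\to Z(C)$, $Y\mapsto (Y,(c^{-1})_{Y,-})$; by the very definition of $F$ one has $F=\otimes_{Z(C)}\circ(F_+\boxtimes F_-)$, since $F(X\boxtimes Y)$ is $F_+(X)\otimes F_-(Y)$ with its canonical half-braiding. Each of $F_\pm$ is fully faithful: a morphism in $Z(C)$ between objects carrying the half-braidings $c_{-,\bullet}$ (resp. $(c^{-1})_{\bullet,-}$) is just a morphism in $C$ intertwining them, and naturality of the braiding $c$ makes \emph{every} morphism of $C$ do so. Thus $D_+:=\im F_+$ and $D_-:=\im F_-$ are fusion subcategories of $Z(C)$ braided-equivalent to $C$ and to $C^{bop}$ respectively. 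A direct computation of the double braiding in $Z(C)$ between $F_+(X)$ and $F_-(Y)$ returns the identity, because the two half-braidings it involves, $c_{X,Y}$ and $(c_{X,Y})^{-1}$, are mutually inverse; hence $D_+$ and $D_-$ centralize each other in $Z(C)$. Finally record the dimension count $\dim(C\boxtimes C^{bop})=(\dim C)^2=\dim Z(C)$ (the last equality being a theorem of Müger), and that $Z(C)$ is itself always a non-degenerate braided fusion category.

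For the forward implication, assume $C$ is non-degenerate. Then $D_+\simeq C$ is a non-degenerate fusion subcategory of the non-degenerate braided fusion category $Z(C)$, so Müger's centralizer theorem applies: the tensor product induces a braided equivalence $D_+\boxtimes C_{Z(C)}(D_+)\xrightarrow{\ \sim\ }Z(C)$, and $\dim C_{Z(C)}(D_+)=\dim Z(C)/\dim D_+=\dim C$. Since $D_-\subseteq C_{Z(C)}(D_+)$ is a fusion subcategory of the same dimension, we get $D_-=C_{Z(C)}(D_+)$. Composing, $F=\otimes_{Z(C)}\circ(F_+\boxtimes F_-)$ is the equivalence $C\boxtimes C^{bop}\simeq D_+\boxtimes D_-\xrightarrow{\otimes}Z(C)$, so $F$ is an equivalence. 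The paper's ``explicit'' route should replace this appeal to Müger by writing the inverse functor by hand — sending $(A,\gamma)\in Z(C)$ to an explicit pair of retracts of $A$ cut out by idempotents assembled from the modular data, together with explicit unit and counit isomorphisms.

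For the converse, assume $C$ is degenerate and pick a simple object $X$ in its Müger center with $X\not\cong\mathbf 1$. Transparency of $X$ says $c_{Z,X}\circ c_{X,Z}=\id$ for all $Z$, which is exactly the assertion that $\id_X$ is an isomorphism $F_+(X)\xrightarrow{\ \sim\ }F_-(X)$ in $Z(C)$. Hence $F(X\boxtimes\mathbf 1)\cong F_+(X)\cong F_-(X)\cong F(\mathbf 1\boxtimes X)$, while $X\boxtimes\mathbf 1\not\cong\mathbf 1\boxtimes X$ in $C\boxtimes C^{bop}$; since $F$, being an equivalence, is fully faithful and hence reflects isomorphism classes of objects, this is a contradiction. (Equivalently: $Z_2(C\boxtimes C^{bop})$ contains $Z_2(C)\boxtimes\mathbf 1$ and is therefore non-trivial, whereas $Z_2(Z(C))\simeq\mathrm{Vec}$.)

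The routine-but-unavoidable bookkeeping is checking that $F$, equivalently $F_\pm$, really is a braided tensor functor — the hexagon axioms for the assembled half-braiding on $X\otimes Y$ — which I would dispatch by a diagram chase from the hexagons for $c$. The genuine content, and the only place non-degeneracy enters, is the essential surjectivity of $F$: that every $(A,\gamma)\in Z(C)$ is a retract of some $X\otimes Y$ with its canonical half-braiding, i.e. $D_+\vee D_-=Z(C)$. In the abstract argument this is precisely Müger's factorization $Z(C)\simeq D_+\boxtimes C_{Z(C)}(D_+)$ together with the dimension match $\dim D_-=\dim C_{Z(C)}(D_+)$; in an explicit treatment the difficulty migrates into verifying the coherence — naturality and the triangle identities — of the hand-built inverse and the two natural isomorphisms, which is presumably where the real labor of the paper lies.
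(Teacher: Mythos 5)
Your argument is essentially correct, but it is not the route the paper takes --- indeed the paper never proves this statement at all: it is quoted as a known Fact (with the EGNO references appearing in the ``Characterization of Modularity''), and the paper's actual content is an explicit, constructive proof of the ``if'' direction. Your forward implication runs through the standard non-constructive machinery: full faithfulness of $F_\pm$, mutual centralization of $D_\pm$, non-degeneracy of $Z(C)$, the dimension formula $\dim Z(C)=(\dim C)^2$, and M\"uger's centralizer theorem plus the ``equal global dimension implies equality of fusion subcategories'' step to identify $D_-=C_{Z(C)}(D_+)$; all of these are legitimate citations and the composition $F=\otimes_{Z(C)}\circ(F_+\boxtimes F_-)$ is checked correctly, so this direction stands, at the price of importing heavy black boxes. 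The paper instead builds the inverse by hand, exactly as you anticipated in your aside: for each simple $i$ it forms an idempotent $\Gamma_{i,(X,\gamma)}$ on $i\otimes X$ assembled from the regular color $\Omega=\oplus_i \dim(i)\,\id_i$ and the half-braiding $\gamma$, takes its image $I_{i,(X,\gamma)}$, sets $G(X,\gamma)=\bigoplus_i i^{\star}\boxtimes I_{i,(X,\gamma)}$, and writes down four explicit natural transformations $b,d,p,q$; modularity enters only through the graphical ``censorship of opacity'' lemma (the $\Omega$-loop killing all opaque simple constituents), which is where your appeal to M\"uger is replaced by a single diagrammatic identity. What your approach buys is brevity and a clean converse --- your transparent-object argument, that a simple $X\not\cong\mathbf{1}$ in the M\"uger center makes $\id_X$ an isomorphism $F(X\boxtimes\mathbf{1})\cong F(\mathbf{1}\boxtimes X)$ while these objects are non-isomorphic in $C\boxtimes C^{bop}$, is correct and is more than the paper spells out for the degenerate case, which it only discusses prospectively. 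What the paper's route buys is precisely what an equivalence-by-M\"uger cannot give: explicit formulas for $G$ and for the unit/counit isomorphisms, which is the point of the article.
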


\noindent The degeneracy fails the equivalence and makes the
algebra richer. But this also makes the TQFT stronger as it
preserves more topological information.

\noindent \textbf{}

It is then urgent to analyze how the degeneracy fails the
equivalence. The main crux is to look at how $F$ and its inverse
fails to compose to functors that are equivalent to the identity
functors. However, to the best of our knowledge, the explicit
construction of the inverse map and the natural equivalences are
not known. The main result provided in the current paper is an
explicit construction of the inverse and the witnessing natural
isomorphisms. Applications and explicit calculations will come in
future work.

~\newline
\noindent \textbf{Inspiration from Topology}

\noindent The main observation of the current work is based on
the following (surprising) fact.

\begin{fact} \cite{fac-homo--kirillov-tham} Let $C$ be a
  premodular category and $\Sigma$ be the cylinder
  $S^{1} \times I$. Then there is an equivalence of categories
  $CY_{C}(\Sigma) \simeq Z(C)$ where $Z(C)$ denotes the Drinfeld
  center.
\end{fact}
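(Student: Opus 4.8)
\noindent The plan is to compute $CY_{C}(S^{1}\times I)$ directly from the state-sum (equivalently, relative skein) model of $CY_{C}$ in \cite{fac-homo--kirillov-tham}, whose value on a surface is a category, and to recognise the output as the Drinfeld center by producing the half-braiding data by hand. First I would fix a convenient skeleton of the cylinder: a cell structure of $S^{1}\times I$ with a single essential $1$-cell $\ell$ running once around the $S^{1}$-factor, so that every $C$-coloured graph in $(S^{1}\times I)\times I$ can, using the local relations, be pushed onto a neighbourhood of $\ell$. An object of $CY_{C}(S^{1}\times I)$ then presents as a colour $V\in C$ on $\ell$ together with, for each $X\in C$, the isomorphism that the local relations force one to record when a strand coloured $X$ is dragged once around the $S^{1}$-direction past $\ell$. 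The crux of the first half of the argument is to check that this recorded data is exactly a half-braiding $\{\,c_{V,X}\colon V\otimes X\xrightarrow{\ \sim\ }X\otimes V\,\}_{X\in C}$: naturality in $X$ and the hexagon/compatibility axiom come out as instances of the skein relations of the premodular category $C$ (ribbon structure included), and a $CY_{C}$-morphism between two such configurations becomes precisely a morphism of $C$ intertwining the half-braidings. This yields a functor $\Phi\colon CY_{C}(S^{1}\times I)\to Z(C)$.

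\noindent Second, I would show $\Phi$ is an equivalence. Faithfulness is the observation that the local relations supported in a disk (the contractible $I$-direction, away from $\ell$) are just those of $C$, so distinct morphisms of $C$ stay distinct. Fullness and essential surjectivity amount to realising every object and morphism of $Z(C)$: given $(V,c_{V,-})\in Z(C)$ one builds the corresponding cylinder configuration and checks, again via the skein relations, that $\Phi$ hits it. A more structural variant of the same conclusion is to identify, in the skein-theoretic model, $CY_{C}(S^{1}\times I)$ with the categorical trace $\int_{S^{1}}C$ of $C$ as a monoidal category (Hochschild homology/cocenter), and then invoke the ``cocenter $\simeq$ center'' equivalence for fusion categories. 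Either way one should record that this is consistent with the TQFT picture behind $CY$ --- its boundary theory is Turaev--Viro $TV_{C}\simeq RT_{Z(C)}$, and $Z(C)$ is modular for every premodular $C$ (M\"uger), so $RT_{Z(C)}$, and hence the statement, makes sense even in the degenerate case.

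\noindent The step I expect to be the main obstacle is verifying that the local relations of the state sum collapse to \emph{exactly} the axioms of the Drinfeld center --- nothing more, so that $\Phi$ is full and essentially surjective and no spurious coherence is imposed, and nothing less, so that $\Phi$ is faithful and the half-braiding axioms are genuinely forced rather than merely consistent. This is precisely the technical content carried by \cite{fac-homo--kirillov-tham}, which is why we invoke it rather than reprove it; the payoff for the present paper is that $CY_{C}(S^{1}\times I)\simeq Z(C)$ holds unconditionally, so that comparing this cylinder category with the image of $F\colon C\boxtimes C^{bop}\to Z(C)$ is exactly what will isolate and measure the degeneracy of $C$.
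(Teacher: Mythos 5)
The paper never proves this statement: it is imported as a Fact from \cite{fac-homo--kirillov-tham}, so there is no in-house argument to compare yours against, and your closing move --- invoking that reference for the technical core rather than reproving it --- is exactly the paper's own treatment. As a reconstruction of the cited proof, your outline points in the right general direction (a skein/state-sum description of $CY_{C}(S^{1}\times I)$, monodromy around the core circle producing the half-braiding, the link to factorization homology over the annulus, and the observation that $Z(C)$ is modular for any premodular $C$ so the statement needs no nondegeneracy hypothesis).

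There is, however, one step stated in a way that would fail if taken literally. An object of a skein-theoretic or state-sum cylinder category is just a $C$-coloured configuration; the local relations do not ``force one to record'' a family of isomorphisms $V\otimes X\simeq X\otimes V$ as part of the object data, and a bare colour $V$ sitting on the core curve $\ell$ carries no canonical such family --- so the functor $\Phi$ as you describe it is not yet defined on objects. The half-braiding only materializes after idempotent (Karoubi) completion, via projectors built from an $\Omega$-coloured loop winding around the $S^{1}$ direction: by the sliding lemma everything enclosed by such a loop becomes transparent, so dragging a strand once around the circle is well defined and natural precisely on the images of these projectors, and (for the finer structure of the equivalence) the censorship phenomenon of Lemma \ref{lemma/censorship-of-opacity} controls what survives. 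This mechanism is not a side detail of \cite{fac-homo--kirillov-tham}; it is the engine of the equivalence, and it is exactly what the present paper recycles algebraically in the coupling morphism $\Gamma_{i,(X,\gamma)}$ and Lemmas \ref{lemma/sliding-lemma} and \ref{lemma/censorship-of-opacity}. Your alternative route via $\int_{S^{1}\times I}C\simeq\int_{S^{1}}C$ and ``cocenter $\simeq$ center'' for fusion categories is reasonable in spirit, but it still requires comparing the Crane--Yetter category with factorization homology, which is again the substance of the cited work rather than a formality.
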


This fact is interesting because it gives a simple topological
interpretation of the grand idea - the Drinfeld center
construction \footnote{Such construction led to the construction
  of quantum groups, winning him a Fields medal in 1990.}. Using
this fact, we gained several insights about the Drinfeld center:

\begin{enumerate}
  \item Besides the natural monoidal structure, there is another
        hidden tensor product for $Z(C)$, namely the reduced
        tensor product $\overline{\otimes}$ \cite{reduced--tham}.
  \item It admits natural generalizations to all (oriented
        punctured) surfaces, namely the categorical center of
        higher genera \cite{guu/higher-genera-center}.
  \item It hints what an explicit inverse map of $F$ and the
        witnessing natural transformations would be. In
        hindsight, the explicit construction seems to be
        out-of-reach if one does not consider its topological
        picture provided by the Crane-Yetter model. This serves
        as the key idea of the paper.
\end{enumerate}

\section{Prerequisites}

We fix an algebraically closed field $\mathbb{k}$ with
characteristic $0$.

\subsection{Premodular Categories}

We will define (pre)modular categories assuming familiarity with
a fusion category, a braided category, ribbon structure, and the
(Drinfeld) categorical center. A complete and recommended source
is \cite{egno/tensor-cats}. For definitions written in a
dictionary-style starting from ``scratch'' (additive categories
and abelian categories), please refer to
\cite[appendix]{guu/higher-genera-center}. Other useful sources
are \cite{kirillov/mtc}, \cite{quantum-group--kassel},
\cite{turaev-qiok-3-manifolds}.

\begin{definition}[Braided Fusion Category]
  A \textit{braided fusion category} is a braided category whose
  underlying monoidal category is a fusion category.
\end{definition}

\begin{definition}[Muger center]
  Given a braided fusion category $C$ with braided structure
  $c_{-,\star}$, we say an object $X$ in $C$ is
  \textit{transparent} (and otherwise \textit{opaque}) if
  $$c_{-,X} \circ c_{X,-} = id_{X,-}.$$

  We define the Muger center $Mu(C)$ of $C$ to be the full tensor
  subcategory of $C$ consisting of transparent objects. Note that
  in some other literature, the Muger center is also called a
  Muger centralizer or an $E_{2}$-center.
\end{definition}

Recall that if $c_{-,\star}$ is a braided structure of a braided
fusion category $C$, then $c_{\star,-}^{\text{-}1}$ is also a
braided structure for the underlying fusion category. This
produces an opposite braided fusion category, which we denote by
$C^{bop}$. Directly by the definition of a (Drinfeld) categorical
center, there is a tautological functor from
$ C \boxtimes C^{bop}$ to $Z(C)$.

\begin{definition}[Tautological functor $F$]
  Given a braided fusion category $C$, there is a natural functor
  $C \boxtimes C^{bop} \xrightarrow{F} Z(C)$ that maps each object
  $X \boxtimes Y$ to
  $(X \otimes Y, c_{-,X} \otimes c_{Y,-}^{-1})$ and each morphism
  $(f \boxtimes g)$ to $(f \otimes g)$.
\end{definition}

\begin{definition}[Factorizable category]
  Given a braided fusion category $C$, if its tautological
  functor $F$ is an equivalence of categories, we say that $C$ is
  \textit{factorizable}, and call any of its inverse functor
  $F^{\text{-}1}$ a factorization of the Drinfeld center $Z(C)$.
\end{definition}

Notice that the structure of $Z(C)$ is in general pretty opaque.
For example, even the fusion ring of $Z(C)$ is hard to identify.
Factorizability reduces the complexity of $Z(C)$ to that of $C$.

\begin{definition}[Premodular Category]
  A premodular category is a ribbon fusion category
  (equivalently, a braided fusion category equipped with a
  spherical structure).
\end{definition}

\begin{definition}[Complete set of simple
  objects]\label{def/complete-set-of-simples}
  Let $C$ be a premodular category. By a complete set of simple
  objects $O(C)$ we mean a set $O(C) = \{i, j, \ldots\}$ of
  simple objects in $C$ that exhausts all simple types and that
  satisfies $(i \neq j) \Rightarrow (i \not\simeq j)$. Define its
  dual set to be
  $$O(C)^{\star} = \{ i^{\star} \,|\, i \in O(C)\},$$
  where $i^{\star}$ denotes the (left) dual object of $i$.
\end{definition}

\noindent Notice that by the axiom of premodular category, any
$O(C)$ is a finite set. From now on, we assume that any
premodular category $C$ comes with a fixed complete set of simple
objects $O(C)$.

\begin{definition}[$S$-matrix]
  Let $C$ be a premodular category with the braided structure
  $c$. The $S$-matrix of $C$ is defined by
  $$
  S := (s_{XY})_{X,Y \in \mathcal{O}(C)}
  $$
  where $s_{XY} = Tr(c_{Y,X}c_{X,Y}) \in \mathbb{k}$, where $Tr$
  denotes the (left) quantum trace that depends on the spherical
  structure of $C$.
\end{definition}

\begin{definition}[Modular Category]\cite[8.13.14]{egno/tensor-cats}
  A modular category is a premodular category $C$ whose
  $S$-matrix is non-degenerate.
\end{definition}

\begin{fact}[Characterization of Modularity]\cite[8.20.12 and 8.19.3]{egno/tensor-cats}
  The following conditions are equivalent for a premodular
  category $C$:
  \begin{enumerate}
    \item $C$ is modular.
    \item $Mu(C) \simeq (Vect.)$
    \item $C$ is factorizable.
  \end{enumerate}
\end{fact}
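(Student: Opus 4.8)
\begin{Proof}[Proof sketch]
I will prove the cycle $(1)\Rightarrow(2)\Rightarrow(3)\Rightarrow(1)$, taking as black boxes two standard facts recalled in \cite{egno/tensor-cats}: the center $Z(C)$ of a spherical fusion category is itself modular, with $\dim Z(C)=(\dim C)^{2}$; and a routine check shows the tautological functor $F$ is a braided ribbon tensor functor, so that the essential images $F_{+}(C):=F(C\boxtimes\mathbf 1)$ and $F_{-}(C^{bop}):=F(\mathbf 1\boxtimes C^{bop})$ are fusion subcategories of $Z(C)$ that centralize one another --- already in $C\boxtimes C^{bop}$ the objects $X\boxtimes\mathbf 1$ and $\mathbf 1\boxtimes Y$ braid trivially, and a braided functor preserves this. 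For $(1)\Rightarrow(2)$: if $X\in O(C)$ is transparent with $X\not\simeq\mathbf 1$, then $c_{Y,X}c_{X,Y}=\id_{X\otimes Y}$ gives $s_{XY}=Tr(\id_{X\otimes Y})=d_{X}d_{Y}=d_{X}\,s_{\mathbf 1,Y}$ for every $Y\in O(C)$, so the $X$-row of $S$ is a nonzero multiple of the $\mathbf 1$-row; two distinct proportional rows force $\det S=0$, contradicting modularity. Hence $\mathbf 1$ is the only simple transparent object, i.e. $Mu(C)\simeq(\mathrm{Vect})$.

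For $(2)\Rightarrow(3)$: first, $Mu(C)\simeq(\mathrm{Vect})$ makes $F_{+}(C)\cap F_{-}(C^{bop})$ trivial. A simple object of $Z(C)$ lying in both images comes from a simple $X\in O(C)$ whose half-braiding built from $c$ and whose half-braiding built from $c^{-1}$ are conjugate by an automorphism of $X$, hence (Schur) by a scalar, hence literally equal; but equality of these half-braidings is exactly the transparency condition $c_{-,X}\circ c_{X,-}=\id$, forcing $X\simeq\mathbf 1$. Now apply M\"uger's centralizer lemma \cite{egno/tensor-cats}: two mutually centralizing fusion subcategories with trivial intersection have a fully faithful multiplication functor out of their Deligne product, whose image has dimension the product of the two dimensions. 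Thus $F$ is fully faithful and $\dim F(C\boxtimes C^{bop})=(\dim C)(\dim C^{bop})=(\dim C)^{2}=\dim Z(C)$; a fusion subcategory of full dimension is everything, so $F$ is essentially surjective as well, hence an equivalence.

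For $(3)\Rightarrow(1)$: being a ribbon equivalence, $F$ identifies the $S$-matrix of $Z(C)$ with that of $C\boxtimes C^{bop}$, and since simple objects, quantum dimensions and double braidings all multiply across $\boxtimes$, the latter is the Kronecker product $S(C)\otimes S(C^{bop})$. As $Z(C)$ is modular this product is invertible, and $\det\!\big(S(C)\otimes S(C^{bop})\big)=\det S(C)^{\,|O(C)|}\det S(C^{bop})^{\,|O(C)|}$, so $\det S(C)\neq 0$: $C$ is modular.

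The routine steps really are routine; the crux is $(2)\Rightarrow(3)$, and the least elementary ingredient there is the pair of black-boxed inputs --- the modularity of $Z(C)$ (equivalently $\dim Z(C)=(\dim C)^{2}$, which powers the dimension count) and M\"uger's centralizer lemma. A more self-contained route to $(2)\Rightarrow(1)$ would avoid $Z(C)$ altogether and instead analyze $\ker S$ directly via projector identities in the Grothendieck ring, showing that kernel is spanned by differences of rescaled transparent simples; I prefer the functorial argument since all the machinery is already in place.
\end{Proof}
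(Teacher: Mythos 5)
The paper offers no proof of this Fact at all --- it is imported directly from the cited source \cite[8.20.12 and 8.19.3]{egno/tensor-cats} --- so the only comparison available is with that source, and your sketch is essentially the standard argument found there (M\"uger's centralizer/dimension technique together with the modularity of $Z(C)$); it is correct modulo the inputs you explicitly black-box. A few small remarks: in $(1)\Rightarrow(2)$ the conclusion holds even without knowing $d_X\neq 0$, since a zero $X$-row kills $\det S$ just as well as a proportional one; in $(2)\Rightarrow(3)$ the trivial-intersection computation alone already gives full faithfulness of the multiplication functor (mutual centralization is only needed to make it a braided tensor functor, i.e.\ to know $F$ is braided), and the step ``a fusion subcategory of full global dimension is everything'' silently uses positivity of squared norms, which is available here since $\mathbb{k}$ has characteristic $0$; and in $(3)\Rightarrow(1)$ one should note, as you do, that the paper's notion of factorizable only demands an equivalence of plain categories, so the upgrade to a ribbon braided equivalence (hence identification of $S$-matrices with the Kronecker product $S(C)\otimes S(C^{bop})$) rests on the routine check that $F$ is a ribbon braided functor. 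With those caveats the cycle $(1)\Rightarrow(2)\Rightarrow(3)\Rightarrow(1)$ goes through.
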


Surprisingly, the fact indicates that modularity and
factorizability are equivalent for premodular categories, so as a
consequence modularity reduces the complexity of $Z(C)$. While
this is desirable from the algebraic point of view, it is not the
case from the topological point of view: The power of the
topological quantum field theory is largely reduced by modularity
exactly due to this fact.

\subsection{Graphical Calculus}

We will use the technique of graphical calculus
(\cite{kirillov/mtc} and \cite{quantum-group--kassel}) while
dealing with premodular categories.

\begin{center}
  \includegraphics[height=6cm]{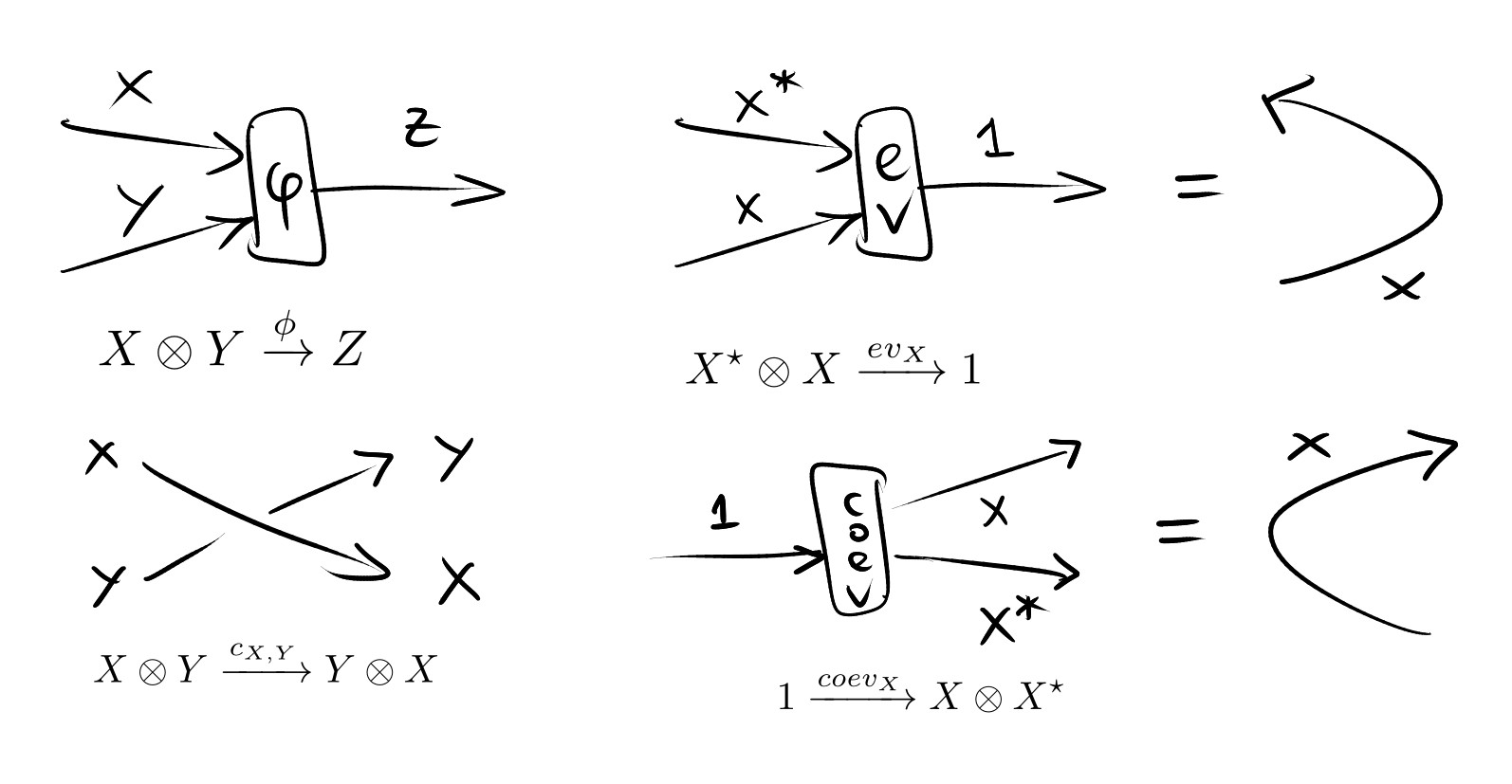}
\end{center}

\noindent An advantage of this is that many equalities among
morphisms can be proved graphically, thanks to the work of
Reshetikhin and Turaev \cite{RT-ribbon-graphs} \cite[Theorem
2.3.10]{kirillov/mtc}. For example, to prove
$$eval_{Y} \circ c_{X,Y} \circ c_{X,Y^{\star}} \circ c_{X,Y} \circ coev_{Y} = c_{X,Y},$$
\begin{center}
  \includegraphics[height=2cm]{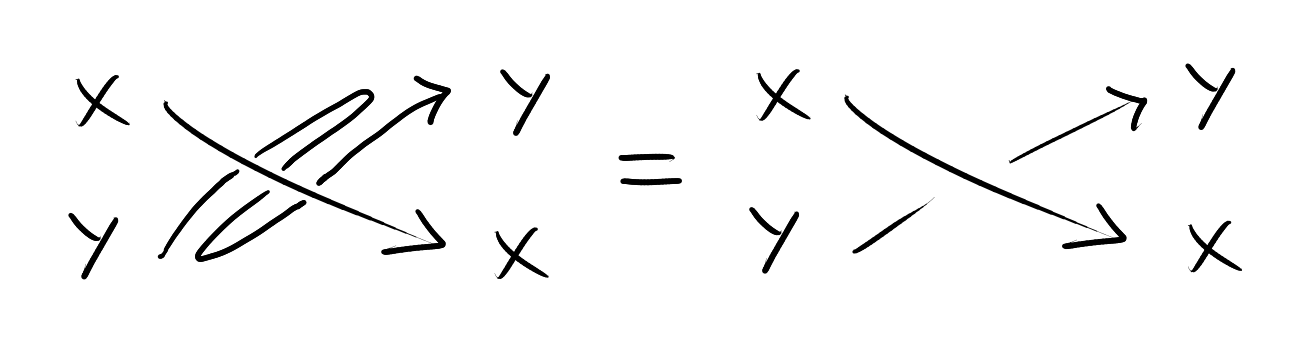}
\end{center}
it suffices to establish an isotopy of ribbon tangles, which is a
trivial task, and translate the procedure back into the equations
in the syntactic equations. Such feature of graphical calculus
provides sophisticated quantum link invariants (e.g. Jones
polynomials). An interesting exercise left for the unconvinced
reader is to turn all graphical equations in this paper into
syntactic equations.

In the rest of the section, we provide some useful
lemmas and notations for graphical calculus.

\begin{lemma}\label{lemma/dual-of-hom-space}
  Let $C$ be a premodular category with spherical structure $a$.
  Let $X, Y$ be $C$-objects. Define a pairing of
  $\mathbb{k}$-linear spaces
  $$Hom_{C}(X,Y) \otimes Hom_{C}(Y,X) \xrightarrow{(,)} \mathbb{k}$$
  that sends $\phi \otimes \psi$
  to $$Tr(\psi \circ \phi) = eval_{X} \circ ((a_{X} \circ \psi \circ \phi) \otimes 1_{X^{\star}}) \circ coev_{X} \in End_{C}(\mathbb{1}) \simeq \mathbb{k}.$$
  Then the pairing is nondegenerate by the semisimplicity of $C$,
  identifying the linear space with its linear dual
  $Hom_{C}(Y,X) \simeq Hom_{C}(X,Y)^{\star}$
\end{lemma}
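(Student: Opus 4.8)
The plan is to reduce to the case of simple objects, where the pairing becomes essentially a $1\times 1$ Gram matrix, and then read off nondegeneracy; the only substantive input is that simple objects have nonzero quantum dimension. First I would fix decompositions $X \cong \bigoplus_{a} X_{a}$ and $Y \cong \bigoplus_{b} Y_{b}$ into simple objects (repeated according to multiplicity), with the associated split inclusions $\iota^{X}_{a},\iota^{Y}_{b}$ and projections $\pi^{X}_{a},\pi^{Y}_{b}$, so $\pi^{Z}_{a}\circ\iota^{Z}_{a'}=\delta_{aa'}\,\id$. These give identifications $Hom_{C}(X,Y)\cong\bigoplus_{a,b}Hom_{C}(X_{a},Y_{b})$ and $Hom_{C}(Y,X)\cong\bigoplus_{a,b}Hom_{C}(Y_{b},X_{a})$. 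Writing $\phi,\psi$ in these coordinates as $(\phi_{ba})$ and $(\psi_{ab})$, one gets $\psi\circ\phi=\sum_{a,b}\iota^{X}_{a}\circ\psi_{ab}\circ\phi_{ba}\circ\pi^{X}_{a}$, and the standard additivity of the quantum trace along $X\cong\bigoplus_{a}X_{a}$ (itself a consequence of cyclicity of $Tr$ together with $\pi^{X}_{a}\circ\iota^{X}_{a'}=\delta_{aa'}\,\id$) gives $Tr(\psi\circ\phi)=\sum_{a,b}Tr(\psi_{ab}\circ\phi_{ba})$. Hence, in these coordinates, $(\,,\,)$ is the orthogonal direct sum of the pairings $Hom_{C}(X_{a},Y_{b})\otimes Hom_{C}(Y_{b},X_{a})\to\mathbb{k}$, $\phi\otimes\psi\mapsto Tr(\psi\circ\phi)$, so it is nondegenerate if and only if each of these is. It therefore suffices to treat $X,Y$ simple.

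When $X,Y$ are simple there are two cases. If $X\not\simeq Y$, then $Hom_{C}(X,Y)=Hom_{C}(Y,X)=0$ and there is nothing to check. If $X\simeq Y$, we may take $X=Y$; since $\mathbb{k}$ is algebraically closed, Schur's lemma gives $Hom_{C}(X,X)=\mathbb{k}\cdot\id_{X}$, which is one-dimensional, and the entire pairing is encoded in the single scalar $(\id_{X},\id_{X})=Tr(\id_{X})$, the quantum dimension of $X$ with respect to the fixed spherical structure $a$. Thus the pairing on these one-dimensional spaces is nondegenerate exactly when $Tr(\id_{X})\neq 0$.

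The remaining point — which I expect to be the actual crux, rather than a formality — is precisely that every simple object of a premodular category has nonzero quantum dimension. Over a field of characteristic zero this is a standard fact about (spherical) fusion categories; see \cite{egno/tensor-cats} (cf.\ also \cite{kirillov/mtc}). Granting it, the block decomposition from the first step assembles the simple cases into nondegeneracy of $(\,,\,)$ on all of $Hom_{C}(X,Y)\otimes Hom_{C}(Y,X)$. I would then close by noting that cyclicity of the spherical trace, $Tr(\psi\circ\phi)=Tr(\phi\circ\psi)$, makes the pairing symmetric under exchanging its two arguments, so the resulting isomorphism $Hom_{C}(Y,X)\simeq Hom_{C}(X,Y)^{\star}$ is canonical in the evident sense; the required equality $\dim Hom_{C}(X,Y)=\dim Hom_{C}(Y,X)$ holds automatically by semisimplicity (both count isomorphic simple constituents of $X$ and $Y$) and is in any event forced by the nondegenerate pairing.
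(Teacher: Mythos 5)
Your argument is correct, and it supplies something the paper itself does not: the paper states this lemma without proof, attributing nondegeneracy simply to ``semisimplicity of $C$.'' Your reduction --- decompose $X$ and $Y$ into simples, use cyclicity/additivity of the spherical trace to split the pairing into orthogonal blocks $Hom_C(X_a,Y_b)\otimes Hom_C(Y_b,X_a)$, and then observe that the only nontrivial blocks are one-dimensional with Gram ``matrix'' $Tr(\id_{X_a})=\dim(X_a)$ --- is the standard route, and you correctly isolate the real crux: nondegeneracy is not a formal consequence of semisimplicity alone but rests on the nonvanishing of quantum dimensions of simple objects in a spherical fusion category over an algebraically closed field of characteristic $0$ (EGNO/ENO; the paper itself implicitly relies on this when it divides by $\dim(i)$ and $\dim(\Omega)$ later on). One cosmetic slip: your displayed formula $\psi\circ\phi=\sum_{a,b}\iota^{X}_{a}\circ\psi_{ab}\circ\phi_{ba}\circ\pi^{X}_{a}$ omits the off-diagonal terms $\iota^{X}_{a}\circ\psi_{ab}\circ\phi_{ba'}\circ\pi^{X}_{a'}$ with $a\neq a'$; this does not affect your proof, since those terms are killed by the trace via cyclicity ($\pi^{X}_{a'}\circ\iota^{X}_{a}=\delta_{aa'}\id$), which is exactly the justification you invoke for the trace identity $Tr(\psi\circ\phi)=\sum_{a,b}Tr(\psi_{ab}\circ\phi_{ba})$ that you actually use.
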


\noindent Define the Casimir element
$$\omega_{X,Y} := \Sigma_{i} \phi_{i} \otimes \phi^{i} \in Hom_{C}(X,Y) \otimes Hom_{C}(Y,X)$$
where the $\phi_{i}$'s is any basis of the former multiplicand
and the $\phi^{i}$'s is its dual basis under the identification
given in \ref{lemma/dual-of-hom-space}. Graphically, we use dummy
variables $\phi$ and $\phi^{\star}$ as a short-hand notation:

\begin{center}
  \includegraphics[height=1.5cm]{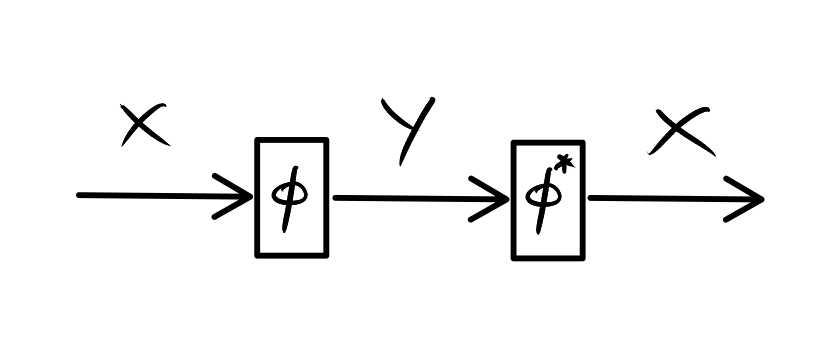}
\end{center}

\begin{lemma} \label{lemma/fundamental-lemma-of-Omega} Let $C$ be
  a premodular category and $W$ be a $C$-object. Then
  $$1_{W} = \Sigma_{i \in \mathcal{O}(C)} \Sigma_{l} dim(i) \phi^{l} \circ \phi_{l},$$
  where the $\phi_{l}$'s and the $\phi^{l}$'s form a pair of dual
  bases for the vector spaces $Hom_{C}(X,Y)$ and $Hom_{C}(Y,X)$
  respectively, and $dim(i)$ denotes the (left) quantum trace of
  $id_{i}$.
\end{lemma}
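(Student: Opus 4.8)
The plan is first to read the statement correctly: as stated it suppresses the objects $X,Y$, but in context one must take $X = W$ and $Y = i$, so that $\phi_l$ runs over a basis of $\operatorname{Hom}_C(W,i)$, $\phi^l$ over the basis of $\operatorname{Hom}_C(i,W)$ dual to it under the pairing of Lemma~\ref{lemma/dual-of-hom-space}, hence $\phi^l \circ \phi_l \in \operatorname{End}_C(W)$ and the right-hand side is a well-formed endomorphism of $W$. Set $E := \sum_{i \in \mathcal{O}(C)} \sum_l \dim(i)\, \phi^l \circ \phi_l \in \operatorname{End}_C(W)$; the goal is to show $E = 1_W$. This is the standard ``resolution of the identity'' in a semisimple category, and the whole point is to reconcile the direct-sum decomposition of $W$ with the trace normalization of the dual bases.

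The first real step is to compute the dual basis explicitly. Since $i$ is simple, Schur's lemma gives $\operatorname{End}_C(i) = \mathbb{k}\,1_i$, so $\phi_l \circ \phi^m = \lambda^m_l\, 1_i$ for scalars $\lambda^m_l$. Applying the quantum trace and using the trace property $Tr(\phi_l \circ \phi^m) = Tr(\phi^m \circ \phi_l)$ together with $Tr(\phi^m \circ \phi_l) = (\phi_l,\phi^m) = \delta^m_l$ (the defining property of dual bases in Lemma~\ref{lemma/dual-of-hom-space}) and $Tr(1_i) = \dim(i)$, one obtains $\lambda^m_l = \delta^m_l/\dim(i)$, i.e. $\phi_l \circ \phi^m = \tfrac{\delta^m_l}{\dim(i)}\,1_i$. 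I expect this to be the only nonformal input: it requires $\dim(i) \neq 0$ for every simple $i$, which holds for premodular categories (indeed it is already forced by the nondegeneracy asserted in Lemma~\ref{lemma/dual-of-hom-space} applied to the pair $(W,i)$, since a nonzero $\phi\colon W\to i$ is split epi with $Tr$ of a splitting composite equal to $\dim(i)$). No diagram chase is delicate; this positivity/nonvanishing is the crux.

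The last step is to test $E$ against morphisms into $W$. Fix morphisms $\iota_{j,m}\colon j \to W$ and $\pi_{j,m}\colon W \to j$ ($j \in \mathcal{O}(C)$) realizing semisimplicity, so that $\pi_{j,m}\circ\iota_{k,n} = \delta_{jk}\delta_{mn}\,1_j$ and $\sum_{j,m}\iota_{j,m}\circ\pi_{j,m} = 1_W$. For any $\psi \in \operatorname{Hom}_C(j,W)$, in $E\circ\psi = \sum_{i,l}\dim(i)\,\phi^l\circ(\phi_l\circ\psi)$ the composite $\phi_l\circ\psi \in \operatorname{Hom}_C(j,i)$ vanishes unless $i = j$; expanding $\psi = \sum_m c_m\phi^m$ in the basis $\{\phi^m\}$ of $\operatorname{Hom}_C(j,W)$ and substituting $\phi_l\circ\phi^m = \tfrac{\delta^m_l}{\dim(j)}\,1_j$ collapses the sum to $E\circ\psi = \psi$. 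Taking $\psi = \iota_{j,m}$ and postcomposing with $\pi_{j,m}$ then yields $E = E\circ 1_W = \sum_{j,m}(E\circ\iota_{j,m})\circ\pi_{j,m} = \sum_{j,m}\iota_{j,m}\circ\pi_{j,m} = 1_W$, which is the claim. The same computation can be carried out entirely in the dummy-$\phi$ graphical notation introduced just above, and that is how I would present it in the paper; one could equally well phrase it as the statement that $\sum_l \dim(i)\,\phi^l\circ\phi_l$ is the isotypic projector of $W$ onto its $i$-component.
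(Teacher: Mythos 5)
Your proof is correct: reading the statement with $X=W$, $Y=i$, computing $\phi_l\circ\phi^m=\tfrac{\delta^m_l}{\dim(i)}1_i$ from Schur's lemma, cyclicity of the quantum trace, and the duality $Tr(\phi^m\circ\phi_l)=\delta^m_l$ of Lemma~\ref{lemma/dual-of-hom-space} (with $\dim(i)\neq 0$ as the one nonformal input), and then checking that $E$ fixes every $\psi\in Hom_C(j,W)$ before summing against a decomposition of $1_W$, is exactly the standard ``isotypic projector'' argument. The paper itself states this lemma without proof (it is a standard fact of graphical calculus, cf.\ the cited sources), so there is nothing to contrast with; your write-up would serve as a proof of it as stated.
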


\begin{notation}[regular color]\label{def/Omega} Let $C$ be a
  premodular category and $O(C)$ a complete (up to isomorphism)
  set of simple objects of $C$. We use $\Omega$ in the graphics
  to represent the regular color
  $\oplus_{i \in \mathcal{O}(C)} dim(i) id_{i}: i \to i$. We also
  denote $dim(\Omega)$ by
  $\Sigma_{i \in \mathcal{O}(C)} dim(i)^{2}$, which is nonzero
  \cite{eno/fusion-cats}.
\end{notation}

\noindent With this shorthand notation $\Omega$, we can present
the lemma graphically by

\begin{center}
  \includegraphics[height=2cm]{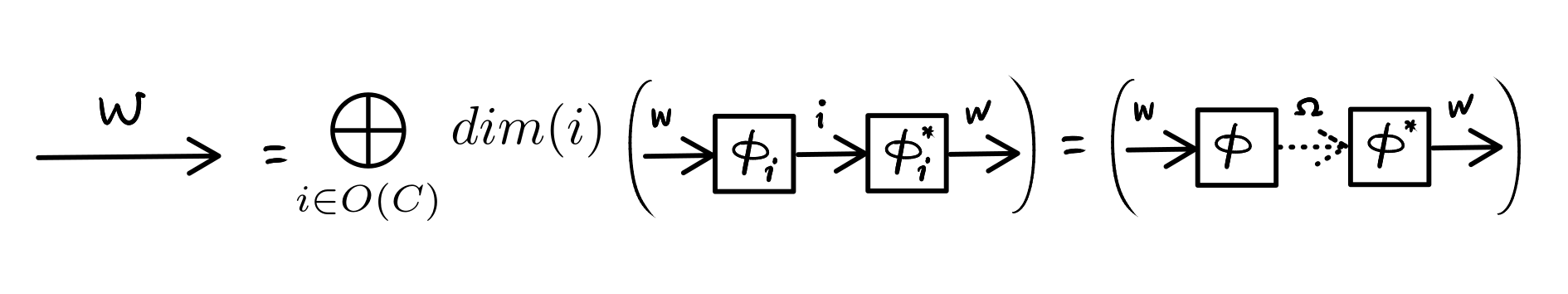}
\end{center}

\begin{lemma}[Sliding lemma]\label{lemma/sliding-lemma}
  Let $C$ be a premodular category. Then the following morphisms
  are all equal, where $\Omega$ is the shorthand notation given
  in \ref{def/Omega}.
  \begin{center}
    \includegraphics[height=2.5cm]{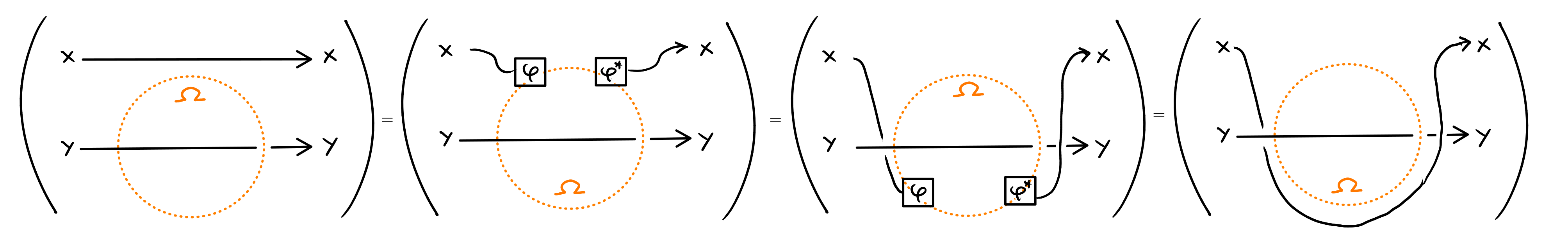}
  \end{center}
\end{lemma}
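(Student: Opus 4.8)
The plan is to prove equality of the displayed morphisms by a short sequence of graphical moves, each step justified by a result already established above. Because every object of $C$ is a finite direct sum of simple objects, and because composition, tensor product, braiding and (co)evaluation are all $\mathbb{k}$-bilinear, it suffices to check the identity in the case where the strand being slid past the $\Omega$-colored strand carries a simple label and the coupons appearing in the picture are drawn from fixed bases of the relevant $Hom$-spaces (cf.\ Lemma~\ref{lemma/dual-of-hom-space}). This reduces the statement to a finite family of scalar identities.

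The engine of the argument is Lemma~\ref{lemma/fundamental-lemma-of-Omega}. Writing $\Omega = \oplus_{i \in O(C)} \dim(i)\,\id_i$, the fundamental lemma lets me cut a strand into simple-colored segments with basis coupons at their ends, and conversely collapse such a configuration back to a single strand; applied near the crossing in question, it rewrites each of the displayed pictures as one and the same $\dim(i)$-weighted sum over $i \in O(C)$ of diagrams in which only simple strands meet the coupon. For simple $i$ one then transports the coupons across the other strand by naturality of the braiding $c_{-,\star}$, so that the only discrepancy between the ``over'' and the ``under'' presentations is a double braiding acting on a simple segment bounded by a pair of dual coupons. Summing this discrepancy against the weights $\dim(i)$ and evaluating by means of the spherical structure shows the contribution is the same in every case, after which reassembling the segments via Lemma~\ref{lemma/fundamental-lemma-of-Omega} in reverse yields the target picture.

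I expect the principal obstacle to be bookkeeping rather than ideas: one must keep track of which crossings are over and which under, keep the orientations and left duals of every strand consistent, and verify that each ribbon twist introduced by an intermediate isotopy is absorbed by the normalization built into $\Omega$. The step where the double-braiding discrepancy is swallowed by the $\dim(i)$-weighted sum is the only place where the specific choice $\Omega = \oplus_i \dim(i)\,\id_i$, rather than an arbitrary object, is actually used, so it deserves to be written out in full — or, following the paper's graphical-calculus philosophy, checked as an isotopy of ribbon tangles and then transcribed into the corresponding syntactic identity.
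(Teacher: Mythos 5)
The paper itself offers no proof of this lemma: it is quoted as a standard fact of the graphical calculus, namely the handle-slide property of the regular (Kirby) color $\Omega$ (see \cite{kirillov/mtc}), so your proposal has to be judged against that standard argument. Your preparatory moves are fine (reduction by semisimplicity, cutting strands into simple segments via Lemma~\ref{lemma/fundamental-lemma-of-Omega}, transporting coupons by naturality of the braiding), but the decisive step is not: you reduce the difference between the displayed morphisms to ``a double braiding acting on a simple segment bounded by a pair of dual coupons'' and then assert that summing this against the weights $\dim(i)$ and ``evaluating by means of the spherical structure'' makes the contribution the same in every case. That assertion is exactly the content of the lemma, and as a general principle it is false: in a premodular category a $\dim(i)$-weighted sum over simples does not neutralize a double braiding. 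The true statement of that flavor is the censorship Lemma~\ref{lemma/censorship-of-opacity}, which requires modularity and yields $\dim(\Omega)\,\delta_{i,1}$ rather than an equality of over- and under-crossings; the sliding lemma, by contrast, must hold for every premodular $C$, since the paper uses it with no modularity hypothesis (to get $\Gamma_{i,(X,\gamma)}^2=\Gamma_{i,(X,\gamma)}$ and to show $b,p$ are morphisms in $Z(C)$). So as written your key step either smuggles in modularity or is circular.

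The correct mechanism is a re-indexing of the dual-basis decomposition around the $\Omega$-loop, not a cancellation of braidings. Isotope the picture so that the strand being slid, say colored $V$, runs parallel to the loop strand colored $i$ along an arc; regard that cable as $i\otimes V$ and insert $\id_{i\otimes V}=\sum_{k}\dim(k)\sum_{\alpha}\alpha^{\vee}\circ\alpha$ with $\alpha$ running over a basis of $\mathrm{Hom}_C(i\otimes V,k)$, as in Lemma~\ref{lemma/fundamental-lemma-of-Omega}. Each term now has a $k$-colored loop with two coupons on it; using the rigidity isomorphism $\mathrm{Hom}_C(i\otimes V,k)\simeq \mathrm{Hom}_C(i,k\otimes V^{\star})$, whose compatibility with the trace pairing of Lemma~\ref{lemma/dual-of-hom-space} is precisely where the spherical structure and the specific weights $\dim(i)$ enter, the double sum $\sum_{i}\dim(i)\sum_{k}\dim(k)\sum_{\alpha}(\cdots)$ can be read, with the roles of $i$ and $k$ exchanged, as the dual-basis decomposition attached to the slid picture; collapsing it by Lemma~\ref{lemma/fundamental-lemma-of-Omega} in reverse produces the other diagram. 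No double braiding ever needs to be killed, which is why the argument survives degeneracy of the $S$-matrix. If you do not want to write this out, the honest alternative is to cite the handle-slide property of the regular color directly from the literature rather than to re-derive it.
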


Heuristically, the moral of this lemma is that $\Omega$ protects
everything ``inside'' it by making it transparent.

\begin{lemma}[Censorship of Opacity]\label{lemma/censorship-of-opacity}\cite[(3.1.19)]{kirillov/mtc}
  Let $C$ be a modular category, $i$ a simple $C$-object, and
  $\lambda = dim(\Omega)\delta_{i,1}$, we have the following
  equality.
  \begin{center}
    \includegraphics[height=2cm]{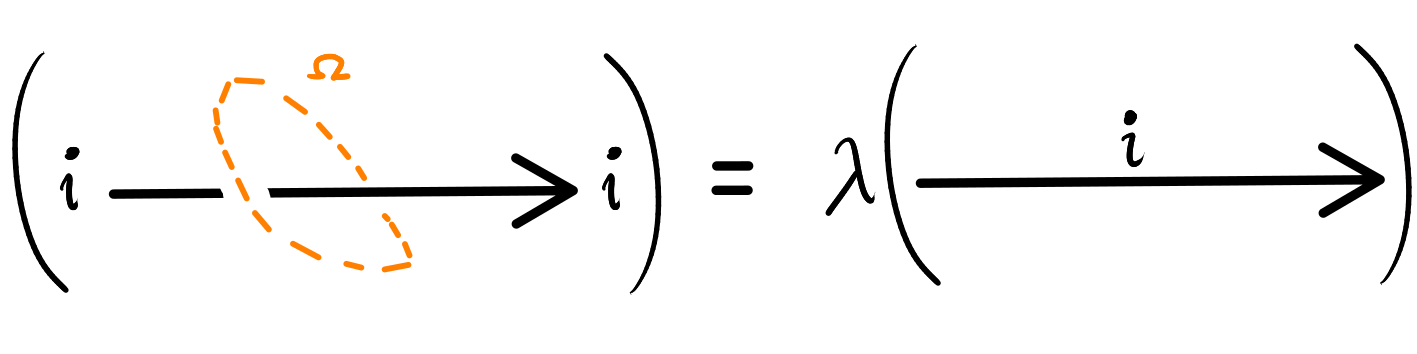}
  \end{center}
\end{lemma}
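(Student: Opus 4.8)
The plan is to reduce the asserted graphical identity to an identity of scalars, then to pin that scalar down in two cases. Since $C$ is a fusion (hence semisimple) category over the algebraically closed field $\mathbb{k}$ and $i$ is simple, $\End_C(i) = \mathbb{k}\cdot\id_i$; therefore the endomorphism of $i$ obtained by encircling the $i$-strand with an $\Omega$-colored loop is $\mu_i\cdot\id_i$ for a unique scalar $\mu_i\in\mathbb{k}$, and it suffices to show $\mu_i = \lambda = dim(\Omega)\,\delta_{i,\mathbb{1}}$.

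First I would compute $\mu_i$ by taking the (left) quantum trace of both sides, i.e.\ by closing the $i$-strand into a loop. On the right this gives $\mu_i\,dim(i)$, and $dim(i)\neq 0$. On the left, expanding $\Omega = \bigoplus_{j\in\mathcal{O}(C)} dim(j)\,\id_j$ turns the picture into $\sum_{j\in\mathcal{O}(C)} dim(j)$ copies of the Hopf link of $i$ and $j$, which by definition of the $S$-matrix evaluates to $\sum_{j\in\mathcal{O}(C)} dim(j)\, s_{ij}$. Hence
\[
\mu_i \;=\; \frac{1}{dim(i)}\sum_{j\in\mathcal{O}(C)} dim(j)\, s_{ij}.
\]
When $i\simeq\mathbb{1}$ this is $\sum_j dim(j)^{2} = dim(\Omega)$, since $s_{\mathbb{1},j}=dim(j)$ and $dim(\mathbb{1})=1$; equivalently, the $\Omega$-loop around the trivial strand is the bare closed $\Omega$-loop, whose value is $dim(\Omega)$ by Notation~\ref{def/Omega}. (More generally, if $i$ were transparent then each $j$-loop would slide off the $i$-strand by triviality of the monodromy $c_{-,i}c_{i,-}=\id$, contributing just $dim(j)\,\id_i$, and again $\mu_i = dim(\Omega)$.)

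It remains to treat $i\not\simeq\mathbb{1}$, where we must show $\sum_{j\in\mathcal{O}(C)} dim(j)\, s_{ij}=0$; this is the one place modularity is genuinely used. By the modularity of $C$ (non-degeneracy of $S$, equivalently $Mu(C)\simeq(Vect.)$) one has the orthogonality relation $\sum_{j\in\mathcal{O}(C)} s_{ij}\, s_{jk} = dim(\Omega)\,\delta_{i,k^{\star}}$, i.e.\ $S^{2}=dim(\Omega)\,C$ with $C_{ij}=\delta_{i,j^{\star}}$. Specializing $k=\mathbb{1}$ and using $s_{j\mathbb{1}}=dim(j)$ and $\mathbb{1}^{\star}=\mathbb{1}$ gives $\sum_{j} dim(j)\, s_{ij} = dim(\Omega)\,\delta_{i,\mathbb{1}} = 0$. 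Therefore $\mu_i=0$ for $i\not\simeq\mathbb{1}$, and combined with the previous case $\mu_i = dim(\Omega)\,\delta_{i,\mathbb{1}} = \lambda$; translated back through graphical calculus this is precisely the stated equality.

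The step I expect to be the main obstacle is the vanishing $\sum_j dim(j)\, s_{ij}=0$ for $i\not\simeq\mathbb{1}$: it really requires non-degeneracy and is not a soft consequence of the earlier lemmas --- indeed for a transparent simple $i\not\simeq\mathbb{1}$ in a premodular but non-modular category one gets $\mu_i = dim(\Omega)\neq 0$, in keeping with the paper's theme that degeneracy breaks this ``censorship''. One might hope to establish it purely diagrammatically from the Sliding Lemma~\ref{lemma/sliding-lemma}, but sliding only transports the $\Omega$-loop along strands without ever unlinking anything, so a genuine non-degeneracy input (the $S$-matrix orthogonality above) seems unavoidable, and packaging it cleanly from the tools already developed is the delicate point.
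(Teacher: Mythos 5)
Your reduction is sound: by Schur's lemma the encircled strand is $\mu_i\,\id_i$, taking the quantum trace gives $\mu_i\,dim(i)=\sum_j dim(j)\,s_{ij}$ with $dim(i)\neq 0$, and the case $i\simeq\mathbb{1}$ is handled correctly. Note, however, that the paper itself offers no proof of this lemma --- it is imported by citation from Kirillov's notes --- so the relevant comparison is with the standard proof in that source, and there your argument has a circularity risk: the orthogonality relation $S^{2}=dim(\Omega)\,C$ that you invoke is, in the cited development (and in Bakalov--Kirillov), itself \emph{derived from} this very ``killing'' lemma, by encircling a pair of strands $i,j$ with an $\Omega$-loop. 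Your proof is not irreparably circular, because the relation $\sum_j dim(j)\,s_{ij}=dim(\Omega)\,\delta_{i,\mathbb{1}}$ admits an independent algebraic proof (e.g.\ via the fact that $j\mapsto s_{ij}/dim(i)$ is a character of the fusion ring: setting $z_i=\sum_j dim(j)\,s_{ij}$ one gets $z_i\,(s_{ik}-dim(i)dim(k))=0$ for all $k$, and non-degeneracy of $S$ then kills $z_i$ for $i\not\simeq\mathbb{1}$; this is the route taken in EGNO, Ch.~8), but as written you should either cite such an independent derivation or supply it, rather than lean on $S^{2}=dim(\Omega)\,C$ as a black box.

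Your closing assessment is also slightly off: the purely diagrammatic proof via the Sliding Lemma~\ref{lemma/sliding-lemma} does work, and it is the standard one. Link an extra $j$-colored circle around the $i$-strand inside the picture; sliding it over the $\Omega$-circle unlinks it from $i$, yielding $\mu_i\cdot\frac{s_{ij}}{dim(i)}=\mu_i\cdot dim(j)$, i.e.\ $\mu_i\,(s_{ij}-dim(i)\,dim(j))=0$ for every $j$. If $\mu_i\neq 0$ the $i$-th row of $S$ is proportional to the row of $\mathbb{1}$, contradicting non-degeneracy unless $i\simeq\mathbb{1}$, where $\mu_{\mathbb{1}}=dim(\Omega)$ as you computed. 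This uses only the tools already set up in the paper (Notation~\ref{def/Omega}, the sliding lemma, and non-degeneracy of $S$), avoids the circularity issue entirely, and correctly isolates where modularity enters --- exactly as your last paragraph anticipated, transparency of a simple $i\not\simeq\mathbb{1}$ in the degenerate case gives $\mu_i=dim(\Omega)\neq 0$.
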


\section{Main Result}

For any premodular category $C$, we aim to construct a functor
$$Z(C) \xrightarrow{G} C \boxtimes C^{bop},$$ and prove it
an inverse functor for $C \boxtimes C^{bop} \xrightarrow{F} Z(C)$
in the case that $C$ is modular by constructing explicit natural
isomorphisms.

Throughout this section, we fix a premodular category $C$, fix a
complete set of simple objects $O(C)$ and its dual $O(C)^{\star}$
(\ref{def/complete-set-of-simples}). With a $C$-object $X$ fixed,
$Hom_{C}(X,i^{\star})$ is a finite dimensional vector space over
$\mathbb{C}$ with a natural nondegenerate pairing
\ref{lemma/dual-of-hom-space} with $Hom_{C}(i^{\star},X)$. Pick
and fix an arbitrary basis
$X[i] = \{\alpha_{i,1}, \ldots, \alpha_{i,l_{i}}\}$ for the
former space, and form its dual basis
$X[i]^{\star} = \{\alpha_{i}^{1}, \ldots \alpha_{i}^{l_{i}}\}$ in
the latter. We will drop the super/subfix when there is little
danger of confusion. We also identify $x$ and its (left) double
dual $x^{\star \star}$ by the spherical structure of $C$.

\subsection{Construction}

\begin{definition}[The coupling morphism $\Gamma_{i,(X,\gamma)}$]
  Let $(X,\gamma)$ be an object of $Z(C)$. For each $i \in O(C)$,
  define the $C$-morphism $\Gamma_{i,(X,\gamma)}$ to be the
  product of $\frac{1}{dim(\Omega)}$ and the following morphism

  \begin{center}
    \begin{tikzpicture}
      \draw[->] (0,0.7) -- (2.2,0.7)
      node[pos=0,left] {$i$} node[pos=1,right] {$i$};
      \draw[->] (0,0) -- (2.2,0)
      node[pos=0,left] {$X$} node[pos=1,right] {$X$};
      \node[dotnode] (a) at (0.8,0) {};
      \node[emptynode] (b) at (0.6,1) {};
      \node[emptynode] (c) at (1.3,0.7) {};
      \node[emptynode] (d) at (1.4,-0.3) {};
      \draw[overline,regular] (a) .. controls +(120:1cm) and +(180:0.2cm) ..
      (b) .. controls +(0:0.2cm) and +(120:0.4cm) ..
      (c) .. controls +(-60:1cm) and +(0:0.2cm) ..
      (d) .. controls +(180:0.2cm) and +(-60:0.4cm) .. (a);
      \draw[overline] (1,0.7) -- (2,0.7);
      \draw[overline] (1,0) -- (2,0);
      \node at (0.6,-0.2) {\small $\gamma$};
      \node at (0.9,1.2) {\small $\Omega$};
    \end{tikzpicture}
  \end{center}
\end{definition}

\noindent By axiom, $C$ is an abelian category, so there is a
canonical object $I_{i,(X,\gamma)}$ and two canonical maps
$(i \otimes X) \xrightarrow{\twoheadrightarrow} I_{i,(X,\gamma)}$
and $I_{i,(X,\gamma)} \xrightarrow{\subseteq} (i \otimes X)$ such
that
$\Gamma_{i,(X,\gamma)} = (\subseteq) \circ (\twoheadrightarrow)$.
Both canonical maps depend on $i$ and $(X,\gamma)$. However for
simplicity we often omit mentioning the dependence. Notice also
that $\Gamma_{i,(X,\gamma)}^{2} = \Gamma_{i,(X,\gamma)}$ by the
tensoriality of $\gamma$ and the sliding lemma.

\begin{definition}[The functor $G$]
  Define $G$ to be the functor that sends any object $(X,\gamma)$
  in $Z(C)$ to
  $$\bigoplus_{i \in O(C)} i^{\star} \boxtimes I_{i,(X,\gamma)},$$
  \noindent and any morphism
  $(X,\gamma) \xrightarrow{\phi} (Y,\beta)$ to
  $$\bigoplus_{i} 1_{i^{\star}} \boxtimes (\twoheadrightarrow \circ \Gamma_{i,(Y,\beta)} \circ (1_{i} \otimes \phi) \circ \Gamma_{i,(X,\gamma)} \circ \subseteq).$$
\end{definition}

\noindent
The construction was motivated by the Crane-Yetter theory and the
construction of the categorical center of higher genera
\cite{guu/higher-genera-center}. Notice what while its existence
was known by (Frobenius-Perron) dimension argument
\cite{egno/tensor-cats}, this construction is new and is expected
to provide insight in the difference between two topological
quantum field theories, the Witten-Reshetikhin-Turaev theory and
the Crane-Yetter theory.

We will further construct four natural transformations
$$1 \xrightarrow{d} GF,\quad GF \xrightarrow{q} 1,\quad 1 \xrightarrow{b} FG,\quad FG \xrightarrow{p} 1.$$
and argue that they witness $FG \simeq 1$ and $GF \simeq 1$ where
$C$ is modular.

\begin{definition}[The transformations $d$ and $q$]
  To construct the natural transformation $1 \xrightarrow{d} GF$,
  it suffices to construct a morphism in $C \boxtimes C^{bop}$
  for each object $X \boxtimes Y$. We thus define
  $$d = d_{X \boxtimes Y} := \sum_{i \in O(C)}d_{i} := \sum_{i \in O(C)}\frac{1}{\sqrt{dim(i)}}\sum_{k=1}^{|X[i]|}d_{i}(k),$$
  where $d_{i}(k)$ denotes the following morphism:

  \begin{center}
    \begin{tikzpicture}
      \node (y) at (0,0) {$Y$};
      \node at (0,0.8) {$\boxtimes$};
      \node (x) at (0,1.6) {$X$};
      \node (I) at (4.5,0) {$I_{i, F(X \boxtimes Y)}$};
      \node at (4,0.8) {$\boxtimes$};
      \node (i8) at (4,1.6) {$i^*$};
      \node (y2) at (2,-0.8) {\small $Y$};
      \node at (2,-0.4) {\small $\otimes$};
      \node (x2) at (2,0) {\small $X$};
      \node at (2,0.4) {\small $\otimes$};
      \node (i) at (2,0.8) {\small $i$};
      \draw[->] (x) -- (i8)
        node[pos=0.5,above] {\small $\alpha_{i,k}$};
      \draw[->] (y) to[out=-20,in=180] (y2);
      \draw[->>] (2.8,0) -- (I);
      \draw[decorate,decoration={brace,amplitude=6pt,mirror,raise=2pt}]
        (2.3,-0.8) -- (2.3,0.8);
      \draw 
        (i) .. controls +(-170:1.2cm) and +(170:1.2cm) .. (x2);
      \node[dotnode] at (1.3,0.15) {};
      \node at (1.3,-0.1) {\tiny $\alpha_i^k$};
      \node[dotnode] at (1.05,0.4) {};
      \node at (0.7,0.4) {\tiny coev};
    \end{tikzpicture}
  \end{center}

  Similarly, define the natural transformation
  $GF \xrightarrow{q} 1$ to be the sum
  $$q = q_{X \boxtimes Y} := \sum_{i \in O(C)}q_{i} := \sum_{i \in O(C)}\frac{1}{\sqrt{dim(i)}}\sum_{k=1}^{|X[i]|}q_{i}(k),$$
  where $q_{i}(k)$ denotes the following morphism:

  \begin{center}
    \begin{tikzpicture}
      \node (y) at (4,0) {$Y$};
      \node at (4,0.8) {$\boxtimes$};
      \node (x) at (4,1.6) {$X$};
      \node (I) at (0,0) {$I_{i_* F(X \boxtimes Y)}$};
      \node at (0,0.8) {$\boxtimes$};
      \node (i8) at (0,1.6) {$i^*$};
      \node (y2) at (2,-0.8) {\small $Y$};
      \node at (2,-0.4) {\small $\otimes$};
      \node (x2) at (2,0) {\small $X$};
      \node at (2,0.4) {\small $\otimes$};
      \node (i) at (2,0.8) {\small $i$};
      \node at (1.1,0) {$\subseteq$};
      \draw[->] (y2) to[out=0,in=-160] (y);
      \draw[->] (i8) -- (x)
        node[pos=0.5,above] {\small $\alpha_i^k$};
      \draw[decorate,decoration={brace,amplitude=6pt,raise=2pt}]
      (1.7,-0.8) -- (1.7,0.8);
      \draw (x2) .. controls +(10:1.2cm) and +(-10:1.2cm) .. (i);
      \node[dotnode] at (2.75,0.18) {};
      \node at (2.8,-0.05) {\tiny $\alpha_{i,k}$};
      \node[dotnode] at (2.95,0.4) {};
      \node at (3.2,0.4) {\tiny ev};
    \end{tikzpicture}
  \end{center}
\end{definition}

\noindent Notice that while $d_{i}(k)$ and $q_{i}(k)$ depend on
the choice $X[i]$, the morphisms $d_{i}$ and $q_{i}$ do not.

\begin{definition}[The transformations $b$ and $p$]
  To construct the natural transformation $1 \xrightarrow{b} FG$,
  it suffices to construct a morphism in $Z(C)$ from each object
  $(X,\gamma)$ to
  $FG(X,\gamma) = \bigoplus_{i\in O(C)} i^{\star} \otimes i \otimes X$.
  We thus define
  $$b = b_{(X,\gamma)} := \sum_{i \in O(C)}b_{i}$$
  where $b_{i}$ denotes the product of $\sqrt{dim(i)}$ and the
  following morphism:

  \begin{center}
    \begin{tikzpicture}
      \node (x) at (0,-0.8) {$X$};
      \node (I) at (5.3,-0.4) {$I_{i,(X,\gamma)}$};
      \node (i82) at (5,0.8) {$i^*$};
      \node at (5,0.2) {$\otimes$};
      \node (x2) at (2,-0.8) {$X$};
      \node at (2,-0.4) {$\otimes$};
      \node (i) at (2,0) {$i$};
      \node at (2,0.4) {$\otimes$};
      \node (i8) at (2,0.8) {$i^*$};
      \draw[->] (x) -- (x2);
      \draw[->] (i8) -- (i82);
      \draw[->] (2.8,-0.4) -- (I)
        node[pos=0.5,below] {\tiny $(\twoheadrightarrow) \circ \Gamma_{i,(X,\gamma)}$};
      \draw[decorate,decoration={brace,amplitude=6pt,mirror,raise=2pt}]
        (2.3,-0.8) -- (2.3,0);
      \node[dotnode] (coev) at (1,0.4) {};
      \node at (0.6,0.4) {\tiny coev};
      \draw (coev) to[out=70,in=180] (i8);
      \draw (coev) to[out=-70,in=180] (i);
    \end{tikzpicture}
  \end{center}

  Similarly, define the natural transformation
  $FG \xrightarrow{p} 1$ to be the sum
  $$p = p_{(X,\gamma)} := \sum_{i \in O(C)}p_{i}$$
  where $p_{i}$ denotes the product of $\sqrt{dim(i)}$ and the
  following morphism:

  \begin{center}
    \begin{tikzpicture}
      \node (x) at (5,-0.8) {$X$};
      \node (I) at (0,-0.4) {$I_{i,(X,\gamma)}$};
      \node (i82) at (0,0.8) {$i^*$};
      \node at (0,0.2) {$\otimes$};
      \node (x2) at (3,-0.8) {$X$};
      \node at (3,-0.4) {$\otimes$};
      \node (i) at (3,0) {$i$};
      \node at (3,0.4) {$\otimes$};
      \node (i8) at (3,0.8) {$i^*$};
      \draw[->] (x2) -- (x);
      \draw[->] (i82) -- (i8);
      \draw[->] (I) -- (2.2,-0.4)
        node[pos=0.5,below] {\tiny $\Gamma_{i,(X,\gamma)} \circ \subseteq$};
      \draw[decorate,decoration={brace,amplitude=6pt,raise=2pt}]
        (2.7,-0.8) -- (2.7,0);
      \node[dotnode] (ev) at (4,0.4) {};
      \node at (4.3,0.4) {\tiny ev};
      \draw (ev) to[out=110,in=0] (i8);
      \draw (ev) to[out=-110,in=0] (i);
    \end{tikzpicture}
  \end{center}
\end{definition}

It requires some effort to check that so defined transformations
$b$ and $p$ are indeed morphisms in $Z(C)$. We prove that in the
following lemma.

\begin{lemma}
  Given an $Z(C)$-object $(X,\gamma)$, the definition of the
  natural transformations $b = b_{(X,\gamma)}$ and
  $p = p_{(X,\gamma)}$ are indeed morphisms in $Z(C)$.
\end{lemma}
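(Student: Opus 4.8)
The plan is to reduce the statement to the defining property of a morphism in the Drinfeld center and then verify it one summand at a time by graphical calculus. Recall that a $C$-morphism $f\colon V\to V'$ underlies a morphism $(V,\gamma)\to(V',\gamma')$ of $Z(C)$ precisely when it intertwines the half-braidings, i.e. $(f\otimes\id_U)\circ\gamma_U=\gamma'_U\circ(\id_U\otimes f)$ for every object $U$ of $C$. Here the target of $b$ is $FG(X,\gamma)=\bigoplus_{i\in O(C)}F(i^{\star}\boxtimes I_{i,(X,\gamma)})$, and by the definition of $F$ its half-braiding is block-diagonal, the $i$-th block being $c_{-,i^{\star}}\otimes(c^{-1})_{I_{i,(X,\gamma)},-}$. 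Since $b=\sum_i b_i$ with $b_i$ landing in the $i$-th summand, it suffices to check the intertwining relation for each $b_i$; and since $p_i$ is the vertical mirror of $b_i$ (with $coev$ and $\twoheadrightarrow$ replaced by $ev$ and $\subseteq$), the computation for $p_i$ is the mirror image of the one for $b_i$. So I will only treat $b_i$.

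First I would put the identity into diagrammatic form. Using naturality of the braiding and of the split-idempotent maps $\twoheadrightarrow$ and $\subseteq$, together with $I_{i,(X,\gamma)}\subseteq i\otimes X$, I can move the factor $(\twoheadrightarrow)\circ\Gamma_{i,(X,\gamma)}$ to the outside of both sides of the intertwining equation; what remains is to show that two morphisms $U\otimes X\to i^{\star}\otimes(i\otimes X)\otimes U$ agree after post-composition with $\id_{i^{\star}}\otimes\Gamma_{i,(X,\gamma)}\otimes\id_U$. The left-hand morphism is: apply $\gamma_U$, then create $i^{\star}\otimes i$ by $coev$. The right-hand morphism is: create $i^{\star}\otimes i$ by $coev$, then apply the ambient $F$-half-braiding of $i^{\star}\boxtimes(i\otimes X)$, that is, braid $U$ past $i^{\star}$ by $c$, then past $i$ by $c^{-1}$, then past $X$ by $c^{-1}$. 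In the latter picture the only obstructions to matching the former are (i) the linking of $U$ with the coevaluation arc from $i^{\star}$ to $i$, and (ii) the crossing of $U$ with the $X$-strand being the ambient $c^{-1}$ rather than $\gamma_U$.

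Both obstructions are removed by the Sliding Lemma \ref{lemma/sliding-lemma}. By construction $\Gamma_{i,(X,\gamma)}$ contains an $\Omega$-colored loop encircling the $i$-strand and crossing the $X$-strand through the half-braiding $\gamma$. Sliding the $U$-strand across this loop makes the $i$-strand transparent to it, so its crossings with the two legs of the coevaluation arc cancel (obstruction (i) disappears); and the same move transports $U$ past the $X$-strand through exactly the $\gamma$-crossing that defines $\Gamma_{i,(X,\gamma)}$, which---by the tensoriality and naturality of the half-braiding $\gamma$---is precisely the statement that the ambient crossing on the $X$-strand may be replaced by $\gamma_U$ (obstruction (ii) disappears). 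After re-absorbing any doubled $\Omega$-loop via the idempotency $\Gamma_{i,(X,\gamma)}^{2}=\Gamma_{i,(X,\gamma)}$ and Lemma \ref{lemma/fundamental-lemma-of-Omega}, and keeping track of the $1/\dim(\Omega)$ and $\sqrt{\dim(i)}$ normalizations, the two diagrams become isotopic, which is the required identity. The argument for $p_i$ is the vertical reflection of this one.

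I expect the main obstacle to be the careful over/under bookkeeping in the sliding step, and in particular checking that the crossing produced on the $X$-strand is $\gamma_U$ with the correct variance: one must match the orientation in which $\gamma$ enters the definition of $\Gamma_{i,(X,\gamma)}$ against the $c^{-1}$ coming from the $C^{bop}$-slot of $F$. Once the variances are seen to line up, the remaining manipulations are routine diagrammatic calculus of the kind used elsewhere in this section.
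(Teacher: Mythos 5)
Your proposal is correct and follows essentially the same route as the paper: reduce the claim to the half-braiding intertwining condition for each summand $b_i$ (with $p_i$ handled by symmetry), and verify it graphically using the sliding lemma \ref{lemma/sliding-lemma} together with the tensoriality/naturality of $\gamma$, exactly the two ingredients the paper cites. Your extra bookkeeping (block-diagonal half-braiding of $FG$, pulling the idempotent $\Gamma_{i,(X,\gamma)}$ outside, normalizations) only makes explicit what the paper leaves implicit in its picture.
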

\begin{proof}
  By the definition of $Z(C)$, it suffices to show that $b$ and
  $d$ respect the half-braidings $\gamma$ and
  $c \otimes c^{\text{-}1}$. We provide a graphical proof for
  this fact.

  \noindent Since any proof for $b$ also works similarly for $p$,
  so we shall only prove for $b$. By definition of $Z(C)$, it
  suffices to prove the following equality (functorial in
  $Z \in Obj(C)$)

  \begin{center}
    \begin{tikzpicture}
      \begin{scope}[shift={(0,0)}]
      \node[emptynode] (e) at (0.6,0.7) {};
      \draw[->] (-2,1.6)
        .. controls +(0:2cm) and +(120:0.5cm) .. (e)
        .. controls +(-60:0.5cm) and +(180:0.5cm) .. (2,-1)
        node[pos=1,right] {$Z$};
      \draw[->,overline] (-2,0) -- (2,0)
        node[pos=1,right] {$X$};
      \draw[->,overline] (0,0.5) -- (2,0.5)
        node[pos=1,right] {$i$};
      \draw[overline] (0,0.5)
        .. controls +(180:0.8cm) and +(180:0.8cm) .. (0,1);
      \draw[overline] (0,1) -- (2,1);
      \node[dotnode] (ga) at (0,0) {};
      \node at (-0.15,-0.15) {\tiny $\gamma$};
      \node[emptynode] (a) at (0.2,0.5) {};
      \draw[overline,regular] (ga)
        .. controls +(120:0.8cm) and +(120:0.5cm) .. (a)
        .. controls +(-60:0.8cm) and +(-60:0.5cm) .. (ga);
      \draw[overline] (0.2,0) -- (1,0);
      \draw[overline] (0,0.5) -- (1,0.5);
      \draw[overline] (-2,1.6)
        .. controls +(0:2cm) and +(120:0.5cm) .. (e);
      \end{scope}
      \node at (3.7,0.2) {$=$};
      \begin{scope}[shift={(7,0)}]
      \draw[->,overline] (-2,0) -- (2,0)
        node[pos=1,right] {$X$};
      \draw[->,overline] (0,0.5) -- (2,0.5)
        node[pos=1,right] {$i$};
      \draw[overline] (0,0.5)
        .. controls +(180:0.8cm) and +(180:0.8cm) .. (0,1);
      \draw[overline] (0,1) -- (2,1);
      \node[dotnode] (ga) at (0,0) {};
      \node at (-0.15,-0.15) {\tiny $\gamma$};
      \node[emptynode] (a) at (0.2,0.5) {};
      \draw[overline,regular] (ga)
        .. controls +(120:0.8cm) and +(120:0.5cm) .. (a)
        .. controls +(-60:0.8cm) and +(-60:0.5cm) .. (ga);
      \node[dotnode] (ga2) at (-0.7,0) {};
      \node at (-0.85,-0.15) {\tiny $\gamma$};
      \draw[->] (-2,1.6)
        .. controls +(0:0.8cm) and +(120:0.5cm) .. (ga2)
        .. controls +(-60:0.8cm) and +(180:1cm) .. (2,-1)
        node[pos=1,right] {$Z$};
      \draw[overline] (0.2,0) -- (1,0);
      \draw[overline] (0,0.5) -- (1,0.5);
      \end{scope}
    \end{tikzpicture}
  \end{center}

  \noindent However, this follows directly from the tensoriality
  of the half-braiding $\gamma$ and the sliding lemma
  \ref{lemma/sliding-lemma}.

\end{proof}

\subsection{Statement \& Proof}

We state our main theorem in this paper and will provide a proof
after a few lemmas.

\begin{theorem}[Main Theorem]
  If $C$ is modular, then the functor $G$ is a factorization of
  the Drinfeld center $Z(C)$. More precisely, $G$ is an inverse
  functor for $F$ witnessed by the natural transformations
  $b, d, p, q$.
\end{theorem}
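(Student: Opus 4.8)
The plan is to verify that the four transformations $d,q,b,p$ are well-defined and natural, and then to prove the four equalities $q\circ d = \id_{1}$, $d\circ q = \id_{GF}$, $p\circ b = \id_{1}$, and $b\circ p = \id_{FG}$, where $1$ denotes the identity functor of the relevant category. Granting these, $d$ is a natural isomorphism $1\xrightarrow{\sim}GF$ with inverse $q$, and $b$ is a natural isomorphism $1\xrightarrow{\sim}FG$ with inverse $p$; hence $G$ is a quasi-inverse of $F$ and both are equivalences, which is exactly the assertion. Well-definedness of $b,p$ as morphisms in $Z(C)$ is the content of the lemma just proved; the naturality of all four transformations is a graphical computation using the sliding lemma \ref{lemma/sliding-lemma} together with the tensoriality of the half-braidings, carried out exactly as in that lemma, so I would dispatch it quickly.

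I would handle $q\circ d = \id_{1}$ and $p\circ b = \id_{1}$ first, as the ``easy'' half. Since $d$ lands summand-by-summand in $GF(X\boxtimes Y)=\bigoplus_{i}i^{\star}\boxtimes I_{i,F(X\boxtimes Y)}$ and $q$ is likewise assembled summand-by-summand, $q\circ d=\sum_{i}q_{i}\circ d_{i}$ with no cross terms, and similarly $p\circ b=\sum_{i}p_{i}\circ b_{i}$. Each diagonal term unwinds graphically: in the $C$-factor one gets composites of the form $\alpha_{i}^{l}\circ\alpha_{i,k}$, collapsed by the biorthogonality of the dual bases $X[i],X[i]^{\star}$ (Lemma \ref{lemma/dual-of-hom-space}); in the other factor the $\mathrm{coev}/\mathrm{ev}$ pair closes into a loop; and the normalizations $1/\sqrt{\dim i}$ (appearing squared) together with the factor $1/\dim\Omega$ built into the coupling morphism $\Gamma$ are reabsorbed using the sliding lemma and the fundamental lemma of $\Omega$ (Lemma \ref{lemma/fundamental-lemma-of-Omega}), leaving the identity. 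I do not expect these two identities to need modularity — they merely exhibit $d$ and $b$ as split monomorphisms of functors.

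The essential half is $d\circ q=\id_{GF}$ and $b\circ p=\id_{FG}$, and this is where the hypothesis of modularity is used, through the Censorship of Opacity lemma \ref{lemma/censorship-of-opacity}. The task is to show that the round-trip restricts to the identity on each summand $i^{\star}\boxtimes I_{i,(X,\gamma)}$. For $GF$ the off-diagonal components $d_{i}\circ q_{j}$ with $i\neq j$ vanish by Schur's lemma, since the first tensor factors $i^{\star}$ are pairwise non-isomorphic; for $FG$, however, the summands $F(i^{\star}\boxtimes I_{i,(X,\gamma)})$ and $F(j^{\star}\boxtimes I_{j,(X,\gamma)})$ can share constituents, so the vanishing of the off-diagonal components $b_{i}\circ p_{j}$ ($i\neq j$) is itself a consequence of modularity: after normalizing the diagram by $\Gamma^{2}=\Gamma$ and the sliding lemma, censorship annihilates the resulting $\Omega$-circle around a strand colored by a constituent of $i^{\star}\otimes j$ unless that constituent is trivial, i.e. unless $i\simeq j$. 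For the diagonal terms one must slide the $\mathrm{coev}/\mathrm{ev}$ caps of $d,q$ (resp. $b,p$) past the $\Omega$-loop of $\Gamma$, use $\Gamma^{2}=\Gamma$, and invoke censorship once more so that the $\dim\Omega$ it produces cancels the $1/\dim\Omega$ in $\Gamma$, leaving precisely the defining idempotent of $I_{i,(X,\gamma)}$. I expect the main obstacle to be exactly this bookkeeping: tracking which strands the $\Omega$-loop encircles and checking that after all slides the residual diagram is literally the canonical idempotent $\Gamma_{i,(X,\gamma)}$, not merely conjugate to it. This is also precisely the step that breaks when $Mu(C)\neq\mathrm{Vect}$, since then the $\Omega$-circle is not a scalar.

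Once the four identities are in place, $d$ and $b$ are the claimed witnessing natural isomorphisms and the theorem follows. As a consistency check, and a possible shortcut for the last two identities, one may instead finish after establishing $q\circ d=\id_{1}$ and $p\circ b=\id_{1}$ by a Frobenius--Perron dimension count: $d\circ q$ and $b\circ p$ are then idempotent endomorphisms of $GF$ and $FG$ whose images are isomorphic to the identity functor, and when $C$ is modular these have full dimension, forcing the idempotents to be identities; but I would prefer the explicit graphical argument, since producing the explicit witnesses is the point of the paper.
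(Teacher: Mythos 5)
Your overall architecture matches the paper's: the theorem is reduced to the four identities $q\circ d=\id$, $d\circ q=\id_{GF}$, $p\circ b=\id$, $b\circ p=\id_{FG}$, with the off-diagonal collapse by Schur/duality and with the Censorship of Opacity lemma \ref{lemma/censorship-of-opacity} carrying the modularity hypothesis. However, there is one genuine error in how you distribute the modularity assumption: you group $p\circ b=\id_{Z(C)}$ with $q\circ d=\id_{C\boxtimes C^{bop}}$ as the ``easy half'' and assert that neither needs modularity, $p\circ b$ merely exhibiting $b$ as a split monomorphism. That is false, and the paper's corresponding lemma explicitly assumes modularity and is proved by censorship. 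Concretely, in $p\circ b$ the $\mathrm{coev}/\mathrm{ev}$ caps close the $i$-strand into a loop which, after summing over $i\in O(C)$ with the weights $\sqrt{\dim(i)}\cdot\sqrt{\dim(i)}=\dim(i)$, becomes an $\Omega$-colored circle linking the $\Omega$-circle (carrying $\gamma$) inside $\Gamma_{i,(X,\gamma)}$; neither the sliding lemma \ref{lemma/sliding-lemma} nor the fundamental lemma \ref{lemma/fundamental-lemma-of-Omega} removes that configuration. Only censorship collapses the inner $\Omega$-circle to its unit component with scalar $\dim(\Omega)$, cancelling the $1/\dim(\Omega)$ in $\Gamma$ and leaving $\id_X$. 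When $Mu(C)\neq \mathrm{Vect}$ the transparent simples survive in that circle and $p\circ b$ is a proper idempotent, not the identity. This is not a technicality: if $b_{(X,\gamma)}$ were split mono for every $(X,\gamma)$, every object of $Z(C)$ would be a direct summand of an object in the image of $F$, which already fails for symmetric $C$ (e.g. $C=\mathrm{Rep}(G)$, where objects of $Z(C)$ supported away from the image of $F$ exist). The identity that genuinely holds without modularity is only $q\circ d=\id$, exactly as the paper notes.

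A smaller point: your proposed shortcut (deduce $d\circ q=\id$ and $b\circ p=\id$ from $q\circ d=\id$, $p\circ b=\id$ plus a Frobenius--Perron dimension count) inherits the same problem, since $p\circ b=\id$ is itself one of the modularity-dependent statements; and an idempotent endomorphism of $GF$ with image the identity functor is not forced to be the identity by a global dimension count unless you argue objectwise. So the repair is simply to move $p\circ b=\id$ into the ``censorship half'' of your argument, where your description of the mechanism (slide, use $\Gamma^2=\Gamma$, then censor the linked $\Omega$-circles) is the right one and agrees with the paper's proofs of its second, third, and fourth lemmas.
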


\noindent To prove the main theorem, we first observe an easy
lemma.

\noindent Note that the following lemma does not assume
modularity.

\begin{lemma}
  Let $X \boxtimes Y$ be an object in $C \boxtimes C^{bop}$. Then
  the morphism
  $$X \boxtimes Y \xrightarrow{q \circ d} X \boxtimes Y$$
  is equal to the identity morphism $id_{X \boxtimes Y}$.
\end{lemma}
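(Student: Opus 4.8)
The plan is to compute $q\circ d$ head‑on by graphical calculus, the only structural input being that $GF(X\boxtimes Y)=\bigoplus_{i\in O(C)} i^\star\boxtimes I_{i,F(X\boxtimes Y)}$ is an honest direct sum. Since $d=\sum_i\tfrac1{\sqrt{\dim i}}\sum_k d_i(k)$ has $d_i(k)$ landing in the $i$‑th summand, while the component $q_j(l)$ of $q=\sum_j\tfrac1{\sqrt{\dim j}}\sum_l q_j(l)$ vanishes on every summand but the $j$‑th, the cross terms with $i\neq j$ vanish and
\[
q\circ d \;=\; \sum_{i\in O(C)}\frac1{\dim i}\sum_{k,l} q_i(l)\circ d_i(k).
\]
In each summand the projection $\twoheadrightarrow$ closing $d_i(k)$ meets the inclusion $\subseteq$ opening $q_i(l)$, and $\subseteq\circ\twoheadrightarrow=\Gamma_{i,F(X\boxtimes Y)}$. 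Hence $q_i(l)\circ d_i(k)$ is $\alpha_i^l\circ\alpha_{i,k}$ on the $C$‑factor, and on the $C^{bop}$‑factor it is the morphism obtained by bordering $\Gamma_{i,F(X\boxtimes Y)}$ below and above with the $\mathrm{coev}$/$\mathrm{ev}$ caps that open and close the $i$‑loop, carrying $\alpha_i^k$ and $\alpha_{i,l}$ on it, with $Y$ running straight through.

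The core of the argument is to evaluate this $C^{bop}$‑factor. I would expand $\Gamma_{i,F(X\boxtimes Y)}$ by definition as $\tfrac1{\dim\Omega}$ times an $\Omega$‑loop that meets the strand $i\otimes(X\otimes Y)$ through the half‑braiding $c_{-,X}\otimes c^{\text{-}1}_{Y,-}$ of $F(X\boxtimes Y)$, then use naturality of the braiding to slide the basis morphisms $\alpha_i^k,\alpha_{i,l}$ along so that the $\Omega$‑loop only interacts with the simple strand $i^\star$ (and the $i$‑strand it already encircles), and finally appeal to the sliding lemma \ref{lemma/sliding-lemma} to detach the $\Omega$‑loop altogether. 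The point — and the reason modularity is not needed here — is that $F$'s half‑braiding is assembled from the genuine braiding $c$ and its inverse $c^{\text{-}1}$, so the crossings produced by the $\Omega$‑loop pair off and are removed by the sliding lemma alone; at no stage is the Censorship of Opacity \ref{lemma/censorship-of-opacity}, which is where non‑degeneracy enters, invoked. The $\Omega$‑loop comes off as a free circle worth $\dim\Omega$, cancelling the $\tfrac1{\dim\Omega}$, and what remains is the closed $i$‑loop decorated with $\alpha_i^k$ and $\alpha_{i,l}$ — which, $i^\star$ being simple, is a scalar multiple of $\delta_{kl}$ — tensored with $\id_Y$ on the $C^{bop}$‑factor.

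Assembling the pieces, $q\circ d=\sum_i\tfrac1{\dim i}\sum_{k,l}\bigl(\text{scalar}_{i,k,l}\bigr)\,(\alpha_i^l\circ\alpha_{i,k})\boxtimes\id_Y$, and the normalizations $\tfrac1{\sqrt{\dim i}}$, the factor $\tfrac1{\dim\Omega}$ inside $\Gamma$, and the quantum dimensions picked up when the internal loops are closed combine so that the surviving coefficient is exactly the weight $\dim(i^\star)=\dim(i)$ appearing in Lemma \ref{lemma/fundamental-lemma-of-Omega}. Applying that lemma to $W=X$ with the simple objects ranging over $O(C)^\star$ gives $\sum_i\dim(i)\sum_k\alpha_i^k\circ\alpha_{i,k}=\id_X$, whence $q\circ d=\id_X\boxtimes\id_Y=\id_{X\boxtimes Y}$. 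I expect the main obstacle to be the middle step: performing the graphical simplification that strips off the $\Omega$‑loop cleanly while keeping the over/under crossings straight, and — more delicately — tracking all the quantum‑dimension scalars so that they land precisely on the coefficient demanded by the resolution of the identity.
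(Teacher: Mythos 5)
Your overall route is the paper's own: kill the cross terms in $i$, reduce to the components $q_i(l)\circ d_i(k)$, replace $\subseteq\circ\twoheadrightarrow$ by $\Gamma_{i,F(X\boxtimes Y)}$, and evaluate the $C^{bop}$-component graphically. Your middle step is correct and in fact more explicit than the paper's one-diagram proof: because the half-braiding of $F(X\boxtimes Y)$ is $c_{-,X}\otimes c^{-1}_{Y,-}$, the $\Omega$-loop inside $\Gamma$ is not linked with the $Y$-strand at all (its two crossings with $Y$ cancel), and since it then encircles the antiparallel pair of strands that are capped off by the coev/ev, it slides off the cap by plain isotopy; no appeal to Lemma \ref{lemma/censorship-of-opacity}, hence no modularity, is needed. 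What remains is a free $\Omega$-circle times the closed $i$-loop decorated by $\alpha_i^k$ and $\alpha_{i,l}$, exactly as you say.

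The gap is in your final scalar accounting. With the paper's convention, $X[i]$ and $X[i]^\star$ are dual with respect to the trace pairing of Lemma \ref{lemma/dual-of-hom-space}, so the decorated $i$-loop closes to exactly $Tr(\alpha_{i,l}\circ\alpha_i^k)=\delta_{kl}$ --- no quantum dimension is ``picked up when the internal loops are closed'' --- while the freed $\Omega$-circle contributes $\dim(\Omega)$, which only cancels the $1/\dim(\Omega)$ built into $\Gamma$. The surviving coefficient per $(i,k)$ is therefore the $1/\dim(i)$ coming from the prefactors of $d_i$ and $q_i$, not the weight $\dim(i)$ demanded by Lemma \ref{lemma/fundamental-lemma-of-Omega}; your claim that the scalars ``land on $\dim(i)$'' double-counts. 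A $\dim(i)$ does come out of the loop closure, but only if one instead normalizes the bases by $\alpha_{i,l}\circ\alpha_i^k=\delta_{kl}\,\id_{i^\star}$, and in that convention the correct resolution of the identity is the unweighted one $\sum_i\sum_k\alpha_i^k\circ\alpha_{i,k}=\id_X$, not the $\dim(i)$-weighted statement you cite; under the trace-dual convention your coefficients are off by $\dim(i)^2$. So to close the argument you must fix one normalization and redo the bookkeeping consistently (e.g.\ with composition-normalized bases the stated $1/\sqrt{\dim(i)}$ prefactors do give exactly $\id_X\boxtimes\id_Y$). The paper's own proof is silent about these scalars, so part of the discrepancy is inherited from the source, but as written your last paragraph does not verify the identity it claims.
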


\begin{proof}
  We prove the equality by direct computation.
  \begin{equation}
    \begin{split}
      q \circ d & = (\sum_i q_i) \circ (\sum_j d_j) \\
      & = \sum_{i} q_{i} \circ d_{i} \\
      & = \sum_{i} \sum_{k=1}^{|X[i]|} \sum_{r=1}^{|X[i]|} \frac{1}{dim(i)} q_{i}(k) \circ d_{i}(r) \\
      & = \sum_{i} \sum_{k=1}^{|X[i]|} \frac{1}{dim(i)} q_{i}(k) \circ d_{i}(k) \\
      & = \frac{1}{dim(\Omega)} \sum_{i} \sum_{k=1}^{|X[i]|} \frac{1}{dim(i)}
      \left(
      \begin{tikzpicture}
        \node (Y) at (0,0) {$Y$};
        \node (X) at (0,1.6) {$X$};
        \node at (0,0.8) {$\boxtimes$};
        \node (Y2) at (5,0) {$Y$};
        \node (X2) at (5,1.6) {$X$};
        \node at (5,0.8) {$\boxtimes$};
        \node[emptynode] (y) at (1.7,-0.6) {};
        \node[emptynode] (i8) at (1.7,0) {};
        \node[emptynode] (i) at (1.7,0.6) {};
        \node[emptynode] (y2) at (3.3,-0.6) {};
        \node[emptynode] (i82) at (3.3,0) {};
        \node[emptynode] (i2) at (3.3,0.6) {};
        \draw (i) to[out=180,in=180] (i8);
        \draw (i2) to[out=0,in=0] (i82);
        \node at (2.2,1) {\tiny $\Omega$};
        \draw[->] (Y) to[out=0,in=180] (y) -- (y2) to[out=0,in=180] (Y2);
        \draw (i8) -- (i82);
        \draw[->] (X) -- (X2)
          node[pos=0.5,above] {$\alpha_i^k \circ \alpha_{i,k}$};
        \draw[midarrow=0.9] (i) -- (i2);
        \node at (3.2,0.8) {\tiny $i$};
        \draw[overline] (2.2,0.3)
          .. controls +(90:0.8cm) and +(90:0.8cm) .. (2.7,0.3)
          .. controls +(-90:0.8cm) and +(-90:0.8cm) .. (2.2,0.3);
        \draw[overline] (2.5,0.6) -- (3,0.6);
        \draw[overline] (2.5,0) -- (3,0);
      \end{tikzpicture} \right) \\
      & = id_{X \boxtimes Y}
    \end{split}
  \end{equation}
  The first pair of sums collapse to a single sum because
  $Hom_{C}(i,j)$ is zero unless $i = j$. The second pair of sums
  collapse to a single sum by the simplicity of $i$ and the
  definition of the pairing between $Hom_{C}(X,i^{\star})$ and
  $Hom_{C}(i^{\star},X)$.
\end{proof}

\begin{lemma}
  Suppose $C$ is modular. Let $X \boxtimes Y$ be an object in
  $C \boxtimes C^{bop}$. Then $(d \circ q)_{X \boxtimes Y}$ is
  equal to the identity isomorphism $id_{GF(X \boxtimes Y)}$.
\end{lemma}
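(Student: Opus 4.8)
The plan is to follow the pattern of the preceding lemma and reduce the claim to a single graphical identity; the new feature is that this identity genuinely uses modularity, entering through Censorship of Opacity \ref{lemma/censorship-of-opacity}. Heuristically, in $q\circ d$ one returns to $C$ and a stray regular loop can simply be removed, whereas in $d\circ q$ one returns to $GF(X\boxtimes Y)$, whose summands $I_{i,F(X\boxtimes Y)}$ are themselves images of the modular projectors $\Gamma_{i,F(X\boxtimes Y)}$, so the stray loop must be resolved by Censorship — and this resolution is exactly what fails for non-modular $C$. Concretely, using $GF(X\boxtimes Y)=\bigoplus_{i\in O(C)}i^{\star}\boxtimes I_{i,F(X\boxtimes Y)}$ I would write $d\circ q=\sum_{i,j}d_{i}\circ q_{j}$, where $q_{j}\colon j^{\star}\boxtimes I_{j,F(X\boxtimes Y)}\to X\boxtimes Y$ and $d_{i}\colon X\boxtimes Y\to i^{\star}\boxtimes I_{i,F(X\boxtimes Y)}$ are the indexed summands of $q$ and $d$. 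The $C$-component of $d_{i}\circ q_{j}$ is a composite $j^{\star}\to X\to i^{\star}$, hence zero for $i\neq j$ because $j^{\star}\not\simeq i^{\star}$; so $d\circ q=\sum_{i}d_{i}\circ q_{i}$ is block-diagonal, and since $id_{GF(X\boxtimes Y)}$ is too it suffices to prove $d_{i}\circ q_{i}=id_{i^{\star}\boxtimes I_{i,F(X\boxtimes Y)}}$ for each $i$.

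For fixed $i$, I would glue the pictures defining $q_{i}(k)$ and $d_{i}(r)$. On the $C$-side the composite is $\alpha_{i,r}\circ\alpha_{i}^{k}\colon i^{\star}\to X\to i^{\star}$, which by simplicity of $i^{\star}$ is a scalar times $id_{i^{\star}}$, and by the nondegenerate pairing \ref{lemma/dual-of-hom-space} that scalar is proportional to $\delta_{rk}$; this collapses the sum over $k,r$ to the diagonal and leaves a fixed power of $dim(i)$ in front. On the $C^{bop}$-side the composite (glued along $Y$) is $(\twoheadrightarrow)\circ(\Psi\otimes 1_{Y})\circ(\subseteq)$, where $\Psi\colon i\otimes X\to i\otimes X$ is the cap-then-cup morphism assembled from $coev$, $ev$ and the surviving $\alpha$'s. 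Summing over the dual-basis label — again by \ref{lemma/dual-of-hom-space} and simplicity of the intermediate objects, just as in the preceding lemma — turns $\Psi$ into the canonical idempotent $\pi$ of $i\otimes X$ onto its trivial ($\mathbb{1}$-isotypic) summand, so that everything reduces to evaluating $(\twoheadrightarrow)\circ(\pi\otimes 1_{Y})\circ(\subseteq)$.

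Here is where modularity enters. By Censorship of Opacity \ref{lemma/censorship-of-opacity}, for modular $C$ the trivial-summand projector $\pi$ equals $\tfrac{1}{dim(\Omega)}$ times the morphism that encircles the $(i\otimes X)$-strand by a regular loop $\Omega$. Substituting this, and recalling that $(\subseteq)\circ(\twoheadrightarrow)=\Gamma_{i,F(X\boxtimes Y)}$ is itself $\tfrac{1}{dim(\Omega)}$ times a regular loop encircling $i$ and sliding through $X\otimes Y$ via the half-braiding, I would apply the Sliding lemma \ref{lemma/sliding-lemma} repeatedly to merge these regular loops (one from Censorship, one from each copy of $\Gamma$ hidden in $\twoheadrightarrow$ and $\subseteq$), and then apply Censorship once more to collapse the resulting encirclement of the simple strand $i$. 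Each merge and collapse emits a factor of $dim(\Omega)$ or $dim(i)$, and the bookkeeping is arranged so that, together with the normalizations $\tfrac{1}{\sqrt{dim(i)}}$ built into $d$ and $q$, all these cancel and leave exactly $id_{I_{i,F(X\boxtimes Y)}}$. Summing over $i$ gives $(d\circ q)_{X\boxtimes Y}=id_{GF(X\boxtimes Y)}$.

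I expect this last step — tracking how the several merged regular loops combine under the Sliding lemma and checking that all the $dim(\Omega)$'s, $dim(i)$'s and square-root normalizations cancel — to be the main obstacle, and it is the precise point where non-modularity would break the argument (the regular loop produced by Censorship is simply absent). A more structural route to the same conclusion: since $q\circ d=id$ (preceding lemma), $d\circ q$ is an idempotent with $Tr((d\circ q)_{X\boxtimes Y})=Tr(id_{X\boxtimes Y})=dim(X)dim(Y)$ by cyclicity of the quantum trace; if one also checks, again via Censorship, that the $dim(I_{i,F(X\boxtimes Y)})=Tr(\Gamma_{i,F(X\boxtimes Y)})$ add up to $dim(GF(X\boxtimes Y))=dim(X)dim(Y)$, then $id-d\circ q$ is an idempotent of trace zero, hence has zero image since nonzero objects of a fusion category have nonzero dimension, so $d\circ q=id$.
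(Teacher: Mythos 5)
Your main argument is essentially the paper's own proof: the same collapse of the double sum to the diagonal (orthogonality of non-isomorphic simples for the outer sum, the duality pairing of Lemma \ref{lemma/dual-of-hom-space} for the inner one), followed by the same key graphical step in which the cut skeins inside each $I_{i,F(X\boxtimes Y)}$ are reconnected by a regular $\Omega$-circle via Censorship of Opacity \ref{lemma/censorship-of-opacity} together with the sliding lemma --- exactly where modularity enters --- with the $dim(i)$/$dim(\Omega)$ bookkeeping left at about the same level of detail as in the paper. One caution about your closing ``structural route'': an idempotent of quantum trace zero need not vanish in a general fusion category, because quantum dimensions of simple objects, though nonzero, need not be positive (that is true of Frobenius--Perron dimensions, not of the quantum trace used here), so $Tr(id - d\circ q)=0$ does not by itself force $d\circ q = id$; that aside should be dropped or restricted to the pseudo-unitary case.
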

\begin{proof}
  Recall that the image of $X \boxtimes Y$ under $GF$ is
  $$\bigoplus_{i \in O(C)} i^{\star} \boxtimes I_{i},$$
  where $I_{i}$ denotes $I_{i, F(X \boxtimes Y)}$. We will prove
  the equality by direct computation.
  \begin{equation}
    \begin{split}
      d \circ q & = (\sum_i d_i) \circ (\sum_j q_j) \\
      & = \sum_{i} d_{i} \circ q_{i} \\
      & = \sum_{i} \sum_{k=1}^{|X[i]|} \sum_{r=1}^{|X[i]|} \frac{1}{dim(i)} d_{i}(r) \circ q_{i}(k) \\
      & = \sum_{i} \sum_{k=1}^{|X[i]|} \frac{1}{dim(i)} d_{i}(k) \circ q_{i}(k) \\
      & = \frac{1}{dim(\Omega)} \sum_{i} \sum_{k=1}^{|X[i]|}
      \left(
      \begin{tikzpicture}
        \node (I) at (0,0) {$I_i$};
        \node (i8) at (0,1.6) {$i^*$};
        \node at (0,0.8) {$\boxtimes$};
        \node (I2) at (5,0) {$I_i$};
        \node (i82) at (5,1.6) {$i^*$};
        \node at (5,0.8) {$\boxtimes$};
        \node at (0.5,0) {$\subseteq$};
        \node (y) at (1,-0.6) {$Y$};
        \node (x) at (1,0) {$X$};
        \node (i) at (1,0.6) {$i$};
        \node at (4.5,0) {$\twoheadrightarrow$};
        \node (y2) at (4,-0.6) {$Y$};
        \node (x2) at (4,0) {$X$};
        \node (i2) at (4,0.6) {$i$};
        \draw[->] (y) -- (y2);
        \draw[->] (i8) -- (i82);
        \draw (x) -- (x2);
        \draw (i) -- (i2);
        \draw[overline] (2.2,0.3)
          .. controls +(90:0.8cm) and +(90:0.8cm) .. (2.7,0.3)
          .. controls +(-90:0.8cm) and +(-90:0.8cm) .. (2.2,0.3);
        \draw[overline] (2.5,0.6) -- (3,0.6);
        \draw[overline] (2.5,0) -- (3,0);
        \node[dotnode] at (1.7,0) {};
        \node at (1.7,-0.2) {\tiny $\alpha_{i,k}$};
        \node[dotnode] at (3.2,0) {};
        \node at (3.3,-0.2) {\tiny $\alpha_i^k$};
        \end{tikzpicture} \right) \\
      & = id_{GF(X \boxtimes Y)}
    \end{split}
  \end{equation}
  The pairs of sums collapse as in the proof of the last lemma. The
  cut skeins are connected and protected by a $\Omega$-circle by
  lemma \ref{lemma/censorship-of-opacity}.
\end{proof}

\begin{lemma}
  Suppose $C$ is modular. Let $(X,\gamma)$ be an object in
  $Z(C)$. Then $(p \circ b)_{(X,\gamma)}$ is equal to the
  identity isomorphism $id_{(X,\gamma)}$.
\end{lemma}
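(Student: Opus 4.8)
The plan is to follow the same template as the two preceding lemmas. First I would expand $p \circ b = \sum_{i,j \in O(C)} p_i \circ b_j$ and note that the cross terms drop out: $b_j$ factors through the $j$-th summand $j^\star \otimes I_j$ of $FG(X,\gamma) = \bigoplus_{i} i^\star \otimes I_i$ (sitting inside $\bigoplus_i i^\star \otimes i \otimes X$), while $p_i$ is supported on the $i$-th summand, so $p_i \circ b_j = 0$ unless $i = j$; hence $p \circ b = \sum_i p_i \circ b_i$.

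Next I would simplify $p_i \circ b_i$. Since $(\twoheadrightarrow) \circ (\subseteq) = 1_{I_i}$ (equivalently, the idempotency of $\Gamma_{i,(X,\gamma)}$ recorded after its definition), the extra copies of $\Gamma_{i,(X,\gamma)}$ occurring in $(\twoheadrightarrow)\circ\Gamma_{i,(X,\gamma)}$ and $\Gamma_{i,(X,\gamma)}\circ(\subseteq)$ are redundant, and the composite of the two middle maps is $(\subseteq)\circ(\twoheadrightarrow) = \Gamma_{i,(X,\gamma)}$. Pulling out the scalar $\sqrt{dim(i)}^{2} = dim(i)$, one gets $p_i \circ b_i = dim(i)\,(\mathrm{ev} \otimes 1_X) \circ (1_{i^\star} \otimes \Gamma_{i,(X,\gamma)}) \circ (\mathrm{coev} \otimes 1_X)$. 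Substituting the graphical definition $\Gamma_{i,(X,\gamma)} = \frac{1}{dim(\Omega)}[\,\cdots\,]$ turns this into $\frac{dim(i)}{dim(\Omega)}$ times the diagram in which the $i$-strand is closed into a loop by the $\mathrm{coev}/\mathrm{ev}$ pair, this closed $i$-loop is encircled by an $\Omega$-colored circle, and that $\Omega$-circle is in turn coupled to the $X$-strand through the half-braiding $\gamma$ — the same sort of diagram that appeared in the proof of $d \circ q = \mathrm{id}_{GF}$.

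Finally I would invoke modularity. By the sliding lemma \ref{lemma/sliding-lemma}, the $\gamma$-coupling of the $\Omega$-circle to $X$ can be slid around the closed $i$-loop and cancelled, detaching the $X$-strand from the $\Omega$-circle; the residual diagram is a closed $i$-loop linked once by a bare $\Omega$-circle, tensored with $1_X$. The censorship of opacity \ref{lemma/censorship-of-opacity} then evaluates the $\Omega$-encircled $i$-loop to the scalar $dim(\Omega)\,\delta_{i,\mathbb{1}}$, so $p_i \circ b_i = \frac{dim(i)}{dim(\Omega)} \cdot dim(\Omega)\,\delta_{i,\mathbb{1}} \cdot 1_X = \delta_{i,\mathbb{1}}\, 1_X$ (using $dim(\mathbb{1}) = 1$). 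Summing over $i$ gives $p \circ b = 1_X$, which is $id_{(X,\gamma)}$ since the identity automatically commutes with the half-braiding.

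The step I expect to be the real obstacle is the application of Lemma \ref{lemma/censorship-of-opacity}: the $\Omega$-circle inside $\Gamma_{i,(X,\gamma)}$ is not a free encircling loop but is entangled with the $X$-strand via $\gamma$, so one must first argue carefully — through the sliding lemma — that it genuinely detaches from $X$ before censorship (stated only for a bare $\Omega$-circle around a simple strand) can be applied. One also has to be disciplined about the scalar bookkeeping, tracking the two factors of $\sqrt{dim(i)}$, the $\frac{1}{dim(\Omega)}$ coming from $\Gamma_{i,(X,\gamma)}$, and the closed-loop contributions, since these are exactly what conspire to produce the clean answer; modularity enters precisely through the censorship lemma.
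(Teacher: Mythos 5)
Your setup is fine up to the intermediate diagram: the cross terms $p_i\circ b_j$ with $i\neq j$ vanish, the idempotency of $\Gamma_{i,(X,\gamma)}$ collapses the three copies of the projection to one, and the scalar $\sqrt{\dim(i)}^2=\dim(i)$ is tracked correctly, so $p_i\circ b_i$ is indeed $\frac{\dim(i)}{\dim(\Omega)}$ times the diagram with the $i$-strand closed into a loop, the $\Gamma$-circle ($\Omega$-colored) around it, and that circle meeting the $X$-strand. The gap is the next step: the sliding lemma does not let you detach the $X$-strand from the $\Omega$-circle. That circle crosses $X$ twice, once through the half-braiding $\gamma$ and once through an ordinary braiding crossing, and these two crossings are of different types; they do not cancel under any handle-slide. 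If they did cancel for purely topological reasons, the detachment would hold in every premodular category, and the projection $\Gamma_{i,(X,\gamma)}$ would carry no information about $\gamma$ at all -- which is exactly what fails in the degenerate case. Concretely, your component-wise conclusion $p_i\circ b_i=\delta_{i,\mathbb{1}}1_X$ is false: take $(X,\gamma)=F(A\boxtimes B)$ with $A,B$ simple and $B\not\simeq\mathbb{1}$. Around the $\Gamma$-circle the $\gamma$-crossing and the ordinary crossing cancel on the $A$-strand but compound on the $B$-strand, so the circle ends up encircling both the closed $i$-loop and the $B$-strand; summing over the simple colors in $\Omega$ and using $\sum_j s_{ji}s_{jB}=\dim(\Omega)\,\delta_{i,B^\star}$ gives $p_i\circ b_i=\delta_{i,B^\star}1_X$, not $\delta_{i,\mathbb{1}}1_X$. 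Only the sum over $i$ is the identity; which single summand survives depends on $\gamma$.

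The repair is to reverse your order of operations, which is what the paper does. First sum over $i$: the $\dim(i)$-weighted closed $i$-loops assemble into a second $\Omega$-colored loop, so $(p\circ b)_{(X,\gamma)}$ becomes $\frac{1}{\dim(\Omega)}$ times a diagram with two $\Omega$-loops, the new one encircling the $\Gamma$-circle that is coupled to $X$. Then apply Lemma \ref{lemma/censorship-of-opacity} with the new $\Omega$-loop as the censoring circle and the simple components of the $\Gamma$-circle as the encircled strand: modularity kills every non-unit component and contributes the factor $\dim(\Omega)$, and only after this does the $X$-strand detach, because the surviving unit component has trivial half-braiding with $X$. This yields $\frac{1}{\dim(\Omega)}\cdot\dim(\Omega)\cdot 1_X=1_X$. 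So modularity must be invoked before, not after, the detachment; as written, your use of the sliding lemma at that point is the step that fails.
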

\begin{proof}
  It is not hard to check that $(p \circ b)_{(X,\gamma)}$ is
  equal to the product of $\frac{1}{dim(\Omega)}$ and the
  following morphism

  \begin{center}
    \begin{tikzpicture}
      \draw[regular] (0.6,0) -- (1.4,0)
        .. controls +(0:0.5cm) and +(0:0.5cm) .. (1.4,0.5)
        -- (0.6,0.5)
        .. controls +(180:0.5cm) and +(180:0.5cm) .. (0.6,0);
      \node at (1.2,0.65) {\tiny $\Omega$};
      \node[dotnode] (ga) at (1,-0.5) {};
      \node at (0.9,-0.7) {\tiny $\gamma$};
      \draw[overline,regular] (ga)
        .. controls +(120:0.9cm) and +(120:0.5cm) .. (1.2,0)
        .. controls +(-60:0.9cm) and +(-60:0.5cm) .. (ga);
      \node at (1.5,-0.8) {\tiny $\Omega$};
      \draw[overline,->] (0,-0.5) -- (ga) -- (2,-0.5);
      \draw[overline,regular] (1,0) -- (1.4,0);
      \end{tikzpicture}
  \end{center}

  \noindent By lemma \ref{lemma/censorship-of-opacity}, the
  horizontal $\Omega$ kills off all nontrivial components in the
  vertical $\Omega$ providing the desired equality.
\end{proof}

\begin{lemma}
  Suppose $C$ is modular. Let $(X,\gamma)$ be an object in
  $Z(C)$. Then $(b \circ p)_{(X,\gamma)}$ is equal to the
  identity isomorphism $id_{FG(X,\gamma)}$.
\end{lemma}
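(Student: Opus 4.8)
The plan is to mimic the structure of the preceding three lemmas: reduce $(b\circ p)_{(X,\gamma)}$ to a single graphical expression, and then invoke censorship of opacity (Lemma~\ref{lemma/censorship-of-opacity}) together with the sliding lemma (Lemma~\ref{lemma/sliding-lemma}) to collapse that expression to the identity. First I would write $FG(X,\gamma) = \bigoplus_{i\in O(C)} i^{\star}\otimes i\otimes X$, and observe that $b\circ p$ is the sum over $i,j\in O(C)$ of the composites $b_i\circ p_j$; because $b_j$ factors through $I_{j,(X,\gamma)}$ and $p_i$ into $I_{i,(X,\gamma)}$, and because $\Gamma_{i,(X,\gamma)}$ is the idempotent cutting out $I_{i,(X,\gamma)}$ inside $i\otimes X$, the cross terms ($i\neq j$) vanish and the diagonal terms are governed by $\Gamma_{i,(X,\gamma)}^2 = \Gamma_{i,(X,\gamma)}$, exactly as in the $p\circ b$ computation. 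So I expect $(b\circ p)_{(X,\gamma)}$ to equal $\frac{1}{dim(\Omega)}$ times a direct sum over $i$ of a diagram living in $\mathrm{End}_C(i^{\star}\otimes i\otimes X)$: the strand $X$ carries the half-braiding coupon $\gamma$, there is an $\Omega$-circle linking $X$ (coming from $\Gamma_{i,(X,\gamma)}$), an $i$-strand that gets capped by $\mathrm{coev}$ from the $b$ side and by $\mathrm{ev}$ into the $p$ side (forming, after the idempotents are composed, a single closed $i$-colored loop running through the relevant region), and the free $i^{\star}$ strand on the outside.

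The key step after that is to recognize the $i$-colored loop produced by $\mathrm{coev}$ followed by $\mathrm{ev}$ as (a summand of) an $\Omega$-colored circle encircling the $X$-strand — the same geometric configuration that appears in the $p\circ b$ lemma, but now with the roles of the "horizontal" and "vertical" circles interchanged. The weights work out because the $\sqrt{dim(i)}$ from $b_i$ and the $\sqrt{dim(i)}$ from $p_i$ multiply to $dim(i)$, which is precisely the coefficient of $id_i$ in the definition of $\Omega$ (Notation~\ref{def/Omega}); summing over $i\in O(C)$ assembles the $i$-loop into an honest $\Omega$-circle. I would then apply Lemma~\ref{lemma/censorship-of-opacity}: the $\Omega$-circle coming from the $\mathrm{coev}$/$\mathrm{ev}$ pair, now linking the $X$-strand together with the $\gamma$-coupon and the $\Gamma$-circle, forces everything it bounds to be transparent, so that the tensoriality of $\gamma$ and the sliding lemma let the $\gamma$-coupon slide off, the $\Omega$-circle of $\Gamma_{i,(X,\gamma)}$ to unlink, and the diagram to simplify to the identity on $i^{\star}\otimes i\otimes X$, picking up the single factor $dim(\Omega)$ that cancels the overall $\frac{1}{dim(\Omega)}$.

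The main obstacle I anticipate is bookkeeping the two intertwined $\Omega$-circles correctly: after composing $b_i$ and $p_i$, the idempotent $\Gamma_{i,(X,\gamma)}$ appears twice (once from $(\twoheadrightarrow)\circ\Gamma$ in $b_i$, once from $\Gamma\circ(\subseteq)$ in $p_i$), so I must first use $\Gamma_{i,(X,\gamma)}^2=\Gamma_{i,(X,\gamma)}$ to merge them before the $\mathrm{ev}$/$\mathrm{coev}$ caps can be seen to close up into a loop linking only the $X$-strand and the surviving $\Gamma$-circle. One has to be careful that the half-braiding $\gamma$ is used to pull the $Z$-object strand (here absorbed, since no extra object is present — this is just $\mathrm{End}$ of the fixed object) past the $i$-loop in the right order; the earlier lemma showing $b$ and $p$ are morphisms in $Z(C)$ guarantees the requisite naturality, so the slide is legitimate. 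Once the two $\Omega$-circles are disentangled by censorship, the computation is the mirror image of the $p\circ b$ argument and concludes identically.
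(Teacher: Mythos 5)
Your plan recycles the mechanism of the previous lemma ($p\circ b=\mathrm{id}_X$), but the geometry of $b\circ p$ is genuinely different, and the two steps you lean on are precisely the ones that are unavailable here. First, the cross terms do not vanish for the formal reasons you give. In $p\circ b$ they vanish by direct-sum bookkeeping ($b_i$ lands in the summand $i^{\star}\boxtimes I_{i,(X,\gamma)}$ and $p_j$ is only nonzero on the $j$-th summand). For $b\circ p$ the component $b_j\circ p_i\colon i^{\star}\otimes I_{i,(X,\gamma)}\to j^{\star}\otimes I_{j,(X,\gamma)}$ factors through $X$, and the two idempotents involved, $\Gamma_{i,(X,\gamma)}$ and $\Gamma_{j,(X,\gamma)}$, are endomorphisms of the \emph{different} objects $i\otimes X$ and $j\otimes X$, separated in the composite by an $\mathrm{ev}$ and a $\mathrm{coev}$; neither orthogonality of idempotents nor $\Gamma^2=\Gamma$ can be invoked to merge them or to kill the off-diagonal terms. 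Indeed the off-diagonal terms are nonzero in general without modularity: for $C=\mathrm{Rep}(\mathbb{Z}/2)$ and $(X,\gamma)$ the unit of $Z(C)$ one finds $\Gamma_{i}= \mathrm{id}$, $I_i=i$, and $b_j\circ p_i=\mathrm{coev}_j\circ\mathrm{ev}_i\neq 0$ for $i\neq j$. So the vanishing of cross terms is part of the modular content of the lemma and has to come \emph{out of} the graphical argument, not precede it. Second, there is no closed $i$-coloured loop in the diagonal term $b_i\circ p_i$: the $\mathrm{ev}$ absorbs the incoming legs of the source summand and the $\mathrm{coev}$ creates the outgoing legs of the target summand, so the $i$- and $i^{\star}$-strands are external, not a loop. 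Consequently the sum over $i$ is a direct sum of components living on distinct summands of $FG(X,\gamma)$, each of which must separately be shown to equal $\mathrm{id}_{i^{\star}\otimes I_{i,(X,\gamma)}}$; you cannot assemble the weights $\sqrt{\dim(i)}\cdot\sqrt{\dim(i)}=\dim(i)$ into an $\Omega$-circle as in Notation \ref{def/Omega}. That ``closed loop plus weights equals $\Omega$, then censorship'' argument is exactly the proof of $p\circ b=\mathrm{id}$, where the $\mathrm{coev}$/$\mathrm{ev}$ pair really does close up; it does not transfer to this lemma.

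What the paper actually does for the component $b_j\circ p_i$ is different in substance: draw the $X$-strand carrying two $\gamma$-attached $\Omega$-circles, one encircling the capped $i$-strand and one encircling the cupped $j$-strand; use tensoriality of the half-braiding $\gamma$ to re-attach the left circle at the right coupon; use the sliding lemma \ref{lemma/sliding-lemma} and tensoriality again to slide and detach; and only then apply Lemma \ref{lemma/censorship-of-opacity} to the resulting $\Omega$-ring around the $i$- and $j$-strands, which ``sears'' them, i.e.\ cuts and reconnects them through the trivial object. That searing step is simultaneously where the cross terms die (a trivial channel between a simple $i$ and a simple $j$ forces $i=j$) and where, on the diagonal, the scalars combine with the $1/\dim(\Omega)$ normalizations to leave a single $\Gamma_{i,(X,\gamma)}$, which is the identity of $i^{\star}\otimes I_{i,(X,\gamma)}$ once sandwiched between $\twoheadrightarrow$ and $\subseteq$. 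Your toolbox (censorship, sliding, tensoriality, quantum-dimension bookkeeping) is the right one, but as written the proposal skips the actual mechanism and rests on two steps that fail, so it does not yet constitute a proof.
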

\begin{proof}
  We prove the equality by direct computation.

  \begin{center}
    \begin{tikzpicture}
      \begin{scope}[shift={(0,0)}]
      \draw (0,0) -- (0.8,0)
        .. controls +(0:0.5cm) and +(0:0.5cm) .. (0.8,0.5) -- (0,0.5);
      \draw (3,0) -- (2.2,0)
        .. controls +(180:0.5cm) and +(180:0.5cm) .. (2.2,0.5) -- (3,0.5);
      \node[dotnode] (ga) at (0.5,-0.5) {};
      \node[dotnode] (ga2) at (2.3,-0.5) {};
      \node at (0.4,-0.7) {\tiny $\gamma$};
      \node at (2.2,-0.7) {\tiny $\gamma$};
      \draw[overline] (ga)
        .. controls +(120:0.9cm) and +(120:0.5cm) .. (0.8,0)
        .. controls +(-60:0.9cm) and +(-60:0.5cm) .. (ga);
      \draw[overline] (ga2)
        .. controls +(120:0.9cm) and +(120:0.5cm) .. (2.6,0)
        .. controls +(-60:0.9cm) and +(-60:0.5cm) .. (ga2);
      \draw[overline,->] (0,-0.5) -- (ga) -- (ga2) -- (3,-0.5);
      \draw[overline] (0.5,0) -- (0.8,0)
        .. controls +(0:0.5cm) and +(0:0.5cm) .. (0.8,0.5);
      \draw[overline] (2.4,0) -- (3,0);
      \end{scope}
      \node at (4,0) {$=$};
      \begin{scope}[shift={(5,0)}]
      \draw (0,0) -- (0.8,0)
        .. controls +(0:0.5cm) and +(0:0.5cm) .. (0.8,0.5) -- (0,0.5);
      \draw (3,0) -- (2.2,0)
        .. controls +(180:0.5cm) and +(180:0.5cm) .. (2.2,0.5) -- (3,0.5);
      \node[dotnode] (ga2) at (2.3,-0.5) {};
      \draw[overline] (ga2)
        .. controls +(120:0.9cm) and +(120:0.5cm) .. (2.6,0)
        .. controls +(-60:0.9cm) and +(-60:0.5cm) .. (ga2);
      \node[dotnode] (a1) at (2.22,-0.35) {};
      \node[dotnode] (a2) at (2.4,-0.65) {};
      \node[emptynode] (x) at (0.4,-0.3) {};
      \node[emptynode] (y) at (0.6,-0.7) {};
      \draw[overline] (x)
        .. controls +(120:0.8cm) and +(120:0.5cm) .. (0.8,0)
        .. controls +(-60:0.9cm) and +(-60:0.2cm) .. (y);
      \draw[overline] (x)
        .. controls +(-60:0.1cm) and +(180:1.5cm) .. (a1);
      \draw[overline] (y)
        .. controls +(120:0.1cm) and +(180:1.5cm) .. (a2);
      \draw (ga2) -- (a2);
      \draw[thin_overline={1.5},->] (0,-0.5) -- (ga2) -- (3,-0.5);
      \draw[overline] (0.5,0) -- (0.8,0)
        .. controls +(0:0.5cm) and +(0:0.5cm) .. (0.8,0.5);
      \draw[overline] (2.4,0) -- (3,0);
      \end{scope}
      \node at (9,0) {$=$};
      \begin{scope}[shift={(10,0)}]
      \draw (0,0) -- (0.8,0)
        .. controls +(0:0.5cm) and +(0:0.5cm) .. (0.8,0.5) -- (0,0.5);
      \draw (3.3,0) -- (2.2,0)
        .. controls +(180:0.5cm) and +(180:0.5cm) .. (2.2,0.5) -- (3.3,0.5);
      \node[dotnode] (ga2) at (2.5,-0.5) {};
      \node[emptynode] (a1) at (2,-0.35) {};
      \node[emptynode] (a2) at (2.2,-0.65) {};
      \node[emptynode] (x) at (0.4,-0.3) {};
      \node[emptynode] (y) at (0.6,-0.7) {};
      \node[emptynode] (x2) at (2.2,-0.3) {};
      \node[emptynode] (y2) at (2.4,-0.7) {};
      \draw[overline] (x)
        .. controls +(120:0.8cm) and +(120:0.5cm) .. (0.8,0)
        .. controls +(-60:0.9cm) and +(-60:0.2cm) .. (y);
      \draw[overline] (x)
        .. controls +(-60:0.1cm) and +(180:1.5cm) .. (a1)
        .. controls +(0:0.1cm) and +(-60:0.05cm) .. (x2)
        .. controls +(120:0.6cm) and +(120:0.5cm) .. (2.6,0)
        .. controls +(-60:0.9cm) and +(-60:0.15cm) .. (y2)
        .. controls +(120:0.05cm) and +(0:0.1cm) .. (a2);
      \draw[overline] (y)
        .. controls +(120:0.1cm) and +(180:1.5cm) .. (a2);
      \draw[overline] (ga2)
        .. controls +(120:0.9cm) and +(120:0.5cm) .. (2.8,0)
        .. controls +(-60:0.9cm) and +(-60:0.5cm) .. (ga2);
      \draw[thin_overline={1.5},->] (0,-0.5) -- (ga2) -- (3.3,-0.5);
      \draw[overline] (0.5,0) -- (0.8,0)
        .. controls +(0:0.5cm) and +(0:0.5cm) .. (0.8,0.5);
      \draw[overline] (2.4,0) -- (3.3,0);
      \end{scope}
      \node at (1.5,-2) {$=$};
      \begin{scope}[shift={(2.5,-2)}]
      \draw (1,0.25)
        .. controls +(-90:0.2cm) and +(-90:0.2cm) .. (1.8,0.25);
      \draw[thin_overline={1.5}] (0,0) -- (0.8,0)
        .. controls +(0:0.5cm) and +(0:0.5cm) .. (0.8,0.5) -- (0,0.5);
      \draw[thin_overline={1.5}] (3,0) -- (2,0)
        .. controls +(180:0.5cm) and +(180:0.5cm) .. (2,0.5) -- (3,0.5);
      \node[dotnode] (ga2) at (2.3,-0.5) {};
      \draw[overline] (ga2)
        .. controls +(120:0.9cm) and +(120:0.5cm) .. (2.6,0)
        .. controls +(-60:0.9cm) and +(-60:0.5cm) .. (ga2);
      \draw[thin_overline={1.5},->] (0,-0.5) -- (ga2) -- (3,-0.5);
      \draw[overline] (2.4,0) -- (3,0);
      \draw[thin_overline={1.5}] (1,0.25)
        .. controls +(90:0.2cm) and +(90:0.2cm) .. (1.8,0.25);
      \end{scope}
      \node at (6.5,-2) {$=$};
      \begin{scope}[shift={(7.5,-2)}]
      \draw (0,0) -- (3,0);
      \draw (0,0.5) -- (3,0.5);
      \node[dotnode] (ga2) at (2,-0.5) {};
      \draw[overline] (ga2)
        .. controls +(120:0.9cm) and +(120:0.5cm) .. (2.3,0)
        .. controls +(-60:0.9cm) and +(-60:0.5cm) .. (ga2);
      \draw[thin_overline={1.5},->] (0,-0.5) -- (ga2) -- (3,-0.5);
      \draw[overline] (2,0) -- (3,0);
      \end{scope}
    \end{tikzpicture}
  \end{center}

  The tensoriality of $\gamma$ allows the left circle to attach
  on the right; thus follows the first equality. The sliding
  lemma allows us to slide one strand to the background, and then
  again the tensoriality of $\gamma$ allows detachment; thus
  follows the second equality. Finally, we sear the two strands
  and use lemma \ref{lemma/censorship-of-opacity} to smooth it
  out; thus follows the third equation.
\end{proof}

\section{Discussion \& Prospect}

Using the topological insight from the Crane-Yetter TQFT, we
provided an explicit equivalence between $C \boxtimes C^{bop}$
and $Z(C)$ for modular categories $C$. With the same idea, we
can also provide explicit equivalences (and witnessing natural
isomorphisms) between the categorical centers of higher genera
$Z_{\Sigma}(C)$ \cite{guu/higher-genera-center} and $C^{n}$,
where $\Sigma$ is an oriented surface with $n$ punctures. In
particular, this provides an explicit equivalence between $C$ and
the elliptic Drinfeld center $Z^{el}(C)$ \cite{elliptic--tham}.

We stress again that this only works in the case where $C$ is
modular. This happens for a good reason. Over modular categories,
the Crane-Yetter theory is expected to trivialize to the
Witten-Reshetikhin-Turaev theory by taking boundaries. It is
interesting to investigate the situation where $C$ is not
modular. In fact, this is the motivation of the current paper. We
expect that by measuring how the adjoint functors $F$ and $G$
fail to be an inverse of the other, the difference between both
theories will become clear, leading to a better understanding of
the full power of the Crane-Yetter theory. Moreover, this will
also help understand the structures of the categorical center of
higher genera (note that the Drinfeld center is hard enough).

One way to attack this problem is to look for a general tool in
category theory that measures the failure of the invertibility of
a pair of adjoint functors. Unfortunately, the authors have not
found such tools yet. Another way is to analyze how the
invertibility fails for explicit non-modular categories. Such
genuine premodular categories arise from (super) groups, crossed
modules, and the even part of the semisimplification of
$Rep(U_{q}(\frak{sl}_{2}))$ \cite{q-mackay}.

\section*{Acknowledgement}

It is a great honor of the authors to contribute to such a
beautiful theory. The authors would like to express deep
gratitude to Alexander Kirillov for his guidance.

\printbibliography

\begin{flushright}
  Compiled Time: [\today\,\DTMcurrenttime]
\end{flushright}

\end{document}